\documentclass[12pt, a4paper]{article}
\usepackage[latin1]{inputenc}
\usepackage{amsmath}
\usepackage{amsfonts}
\usepackage{amssymb}
\usepackage{latexsym}
\usepackage{mathrsfs}
\usepackage{graphicx}
\usepackage{pstricks}
\usepackage{color}
\usepackage{twistor}
\usepackage[all]{xy}
\usepackage[colorlinks=true, linkcolor=black, citecolor=black, urlcolor=black, linktocpage=true]{hyperref}

\numberwithin{equation}{section}

\addtolength{\hoffset}{-1.5cm}
\addtolength{\textwidth}{3cm}

\begin{document}

\title{Moduli stacks of maps for supermanifolds}

\author{\normalsize Tim Adamo \& Michael Groechenig \\ \small \textit{The Mathematical Institute} \\ \small \textit{University of Oxford} \\ \small \textit{24-29 St Giles'} \\ \small \textit{Oxford, OX1 3LB, U.K.}}

\date{}

\maketitle

\abstract{We consider the moduli problem of stable maps from a Riemann surface into a supermanifold; in twistor-string theory, this is the instanton moduli space.  By developing the algebraic geometry of supermanifolds to include a treatment of superstacks we prove that such moduli problems, under suitable conditions, give rise to Deligne-Mumford superstacks (where all of these objects have natural definitions in terms of supergeometry).  We make some observations about the properties of these moduli superstacks, as well as some remarks about their application in physics and their associated Gromov-Witten theory.}

\pagebreak

\tableofcontents

\section{Introduction}

The study of moduli spaces in algebraic geometry is an old and storied topic which has proven to be an exciting field for a wide variety of reasons.  Not least among these is the fact that moduli problems present one of the best examples for the use of stack theory in algebraic geometry (c.f., \cite{stacks-project, Gomez:1999}).  Furthermore, the development of string theory over the past twenty-five years has given further impetus to the study of moduli problems.  In particular, the classic study of instanton counting for the quintic Calabi-Yau in $\P^{4}$ \cite{Candelas:1990rm} led to the prominence of Kontsevich's moduli space of stable maps \cite{Kontsevich:1995} and Gromov-Witten theory as a means to study the interaction between enumerative algebraic geometry and mirror symmetry (c.f., \cite{Cox:1999, Vafa:2003}).  The moduli space of stable maps from a Riemann surface into an abstract variety or scheme is by now relatively well understood; indeed it was shown some time ago that this moduli problem gives rise to a Deligne-Mumford stack as long as the target space is projective and smooth \cite{Behrend:1995}.  In this paper we follow the tradition of physics suggesting directions for mathematics by studying a generalization of this moduli problem to a setting where the target space is a supermanifold.

A supermanifold can be na\"{i}vely thought of as a manifold in the usual sense, but now endowed with some coordinates that anti-commute.  Indeed, the earliest definitions of supermanifolds are as locally-ringed spaces which are locally homeomorphic to the super-space $\R^{n|m}$; a space with $n$ real commuting or bosonic degrees of freedom and $m$ anti-commuting or fermionic degrees of freedom \cite{DeWitt:1992}.  The natural analogue of supermanifolds in the theory of algebraic geometry is the concept of super-scheme.  This algebraic perspective was first developed in \cite{Berezin:1975, Kostant:1977, Leites:1980}, and corresponds to an ordinary (bosonic) scheme endowed with a structure sheaf of super-algebras obeying an obvious consistency condition \cite{Bern-96, Manin:1997}. These constructions have been of interest to physicists for some time due to their obvious applicability to the study of supersymmetric field theories (i.e., theories with a symmetry that intertwines bosonic and fermionic degrees of freedoms) and supergravity (c.f., \cite{Rogers:2007}).  However, recent developments in the study of the planar (i.e., large number of colors) sector of maximally supersymmetric ($\cN=4$) super-Yang-Mills (SYM) theory have demonstrated a novel application of super-geometry and highlighted the need for a rigorous theory of the moduli space of stable maps from a Riemann surface to a super-scheme.

In 2003, Witten discovered that particular classes of $n$-particle scattering amplitudes in planar $\cN=4$ SYM can be calculated by integrating over the moduli space of stable maps of degree $d$ from a Riemann surface of genus $g$ into the super-scheme $\P^{3|4}$ (denoted $\bar{\cM}_{g,n}(\P^{3|4}, d)$), where $d+1-g$ is the number of negative helicity particles involved and $g$ is the loop-order of the corresponding field theory calculation \cite{Witten:2003nn}.  For instance, a Maximal Helicity Violating (MHV) tree interaction involves an arbitrary number of positive helicity particles (gluons for Yang-Mills theory) and two negative helicity particles. This amplitude is supported on a degree 1, genus zero (as tree level indicates zero loops) holomorphic curve in $\P^{3|4}$ (i.e., a line).  The target space $\P^{3|4}$ of this theory has the interpretation as the twistor space of $(4|8)$-dimensional chiral Minkowski super-space, the natural space-time of $\cN=4$ SYM (see \cite{Manin:1997, Wolf:2006me} for good reviews of algebraic super-geometry in the context of twistor theory).  

Although Witten's original formulation is by no means unique, the CFT correlator computations associated to scattering amplitudes involve (in one way or another) integrals over the moduli space $\bar{\cM}_{g,n}(\P^{3|4},d)$ in all known twistor-string theories \cite{Berkovits:2004hg, Berkovits:2004tx, Mason:2007zv}.  In much of the literature up to this point, it has been assumed that the properties of $\bar{\cM}_{g,n}(\P^{3|4},d)$ are inherited from its well-studied bosonic counterpart; these include compactness, smoothness (at genus zero), algebraicity, and the Deligne-Mumford property at the level of the stack.  Beyond twistor-string theory itself, this moduli space and its properties have played a role in a myriad of related advances using twistor methods (see \cite{Adamo:2011pv} for a review).  These include: the relationship between the connected and disconnected prescriptions for the twistor-string \cite{Gukov:2004ei}; the embedding of the Grassmannian formalism of \cite{ArkaniHamed:2009si} into the twistor-string \cite{Bullimore:2009cb}; the proof for the BCFW recursion relations \cite{Britto:2004ap, Britto:2005fq} in twistor-string theory \cite{Skinner:2010cz}; the proof of the BCFW recursion relations for the supersymmetric Wilson loop in twistor space \cite{Mason:2010yk, Bullimore:2011ni}; and the derivation of all-loop recursion relations for mixed Wilson loop-local operator correlators \cite{Adamo:2011cd}.  Despite the central role played by $\bar{\cM}_{g,n}(\P^{3|4},d)$ in each of these examples, there has never been a rigorous investigation of this moduli space to verify whether or not its supposed properties actually hold.

In this paper, we provide a rigorous treatment of the moduli stack of stable maps from a Riemann surface to a general super-scheme.  Our main result is the following theorem:
\begin{quotation}
\noindent\textbf{Theorem}  \textit{Let $X$ be a smooth, projective and split super-scheme, and let $\bar{\cM}_{g,n}(X,\beta)$ be the moduli stack of stable maps from a Riemann surface of genus $g$ with $n$ marked points to $X$ whose image lies in the homology class $\beta$ (see Definition \ref{Stablemap}).  Then $\bar{\cM}_{g,n}(X,\beta)$ is a Deligne-Mumford super-stack.}
\end{quotation}
In the case of twistor-string theory, this confirms the working assumptions of the literature discussed earlier.

Section \ref{SEGA} builds a theory of super-schemes which is largely a review or reformulation of studies presented elsewhere in the literature.  We then take a new step by introducing the notion of a super-stack, as well as the corresponding concepts of algebraicity and Deligne-Mumford-ness for these objects.  Our treatment is heavily influenced by Behrend's presentation of stacks at the Isaac Newton School on Moduli Spaces \cite{Behrend:2011}, as well as the classic reference on algebraic stacks \cite{Laumon:2000}.  We then use our theory of super-stacks to show that the stack of stable maps from a Riemann surface to a super-scheme is a Deligne-Mumford super-stack in \S \ref{Moduli}, under natural assumptions.  We also make some observations about other properties of this moduli space which have been used in the physics literature, including its natural maps and smoothness criteria.  In Section \ref{Conclusion}, we make some observations about open issues and applications of our super-stack construction to theoretical physics, as well as laying out some questions about the Gromov-Witten theory associated with such objects.


\subsection{A Note on Terminology}

As our primary interest throughout this paper will be ``super-geometric'' objects (e.g., super-manifolds, super-schemes, etc.), we must make a choice about how to differentiate these constructs from their counterparts which are familiar from ordinary algebraic or differential geometry.  To avoid a proliferation of adjectives, we simply drop the ``super-'' from in front of all super-geometric objects we consider after their initial introduction and definition.  Hence, from now on we will use ``scheme'' to refer to a super-scheme, ``stack'' to refer to a super-stack, and so forth (although we will emphasize the distinction whenever a new ``super-'' construct is introduced).  The objects and constructions of ordinary algebraic geometry will be distinguished by placing a ``bosonic'' before them; thus a classical scheme will be refered to as a ``bosonic scheme'' for the remainder of the paper. All constructions will take place over the field of complex numbers and we will restrict our attention to schemes which are locally of finite type.


\section{Algebraic Super-Geometry}
\label{SEGA}

We now turn to the development of the concepts in ``algebraic super-geometry'' which will be necessary to construct the moduli spaces of interest.  From now on, we consider target spaces using the language of schemes rather than manifolds; in the context of twistor-string theory, this is natural since projective varieties are naturally projective schemes.  We first develop the notion of a super-scheme along the lines of prior research in super-geometry (e.g., \cite{Bern-96, Manin:1997}) and study the local properties of morphisms between these objects.  The remainder of the section is devoted to the development of a suitable theory of super-stacks.  A super-geometric generalization of the GAGA principle \cite{Serre:1956, Topiwala:1991} means that we are free to interpolate between the abstract algebraic point of view, which we take through most of this paper, and the more intuitive complex-analytic approach.  Unless otherwise mentioned, we work over the field $\C$.

   
\subsection{Super-schemes}

\subsubsection{\textit{Super-Rings}}

In order to build algebraic geometry for supersymmetric settings, we must first consider more basic algebraic notions such as rings.  We define a super-ring $R$ to be a ring in the usual sense, but now with fermionic part:
\begin{equation*}
R=R_{b}\oplus R_{f},
\end{equation*}
where $R_{b}$ is an ordinary (i.e., bosonic) ring and $R_{f}$ is made up of anti-commuting elements.  In other words, a super-ring is a $\Z_2$-graded ring which is graded commutative.  A super-ideal $I\subset R$ is an ideal of $R$ in the usual sense and obeys:
\be{eqn: ideal}
I= \left(I\cap R_{b}\right)\oplus\left(I\cap R_{f}\right).
\ee
We say that $I$ is prime if for all $a,b\in R$, $ab\in I$ implies that $a$ or $b$ is in $I$, as usual.  We then define the ``super-spectrum'' (henceforth, ``spectrum'') of $R$, $\mathrm{Spec}(R)$ to be the set of all prime super-ideals in $R$.  As any ring is trivially a super-ring, there is no need for us to distinguish between the notions of spectrum and super-spectrum.  This leads to the following observation relating the spectrum of a super-ring to the spectrum of its bosonic part:

\begin{lemma}\label{ideals} 
Let $R=R_{b}\oplus R_{f}$ be any super-ring.  Then $\mathrm{Spec}(R)\cong\mathrm{Spec}(R_{b})$ as sets, and under this isomorphism every prime super-ideal $\mathfrak{p}\in\mathrm{Spec}(R)$ gets mapped to its bosonic part.
\end{lemma}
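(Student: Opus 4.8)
The plan is to produce two mutually inverse maps of sets, $\Phi\colon\mathrm{Spec}(R)\to\mathrm{Spec}(R_b)$, $\mathfrak{p}\mapsto\mathfrak{p}\cap R_b$ (``take the bosonic part''), and $\Psi\colon\mathrm{Spec}(R_b)\to\mathrm{Spec}(R)$, $\mathfrak{q}\mapsto\mathfrak{q}\oplus R_f$. Once both are known to be well defined, the composite $\Phi\circ\Psi$ is visibly the identity, and $\Psi\circ\Phi=\mathrm{id}$ reduces to the single assertion that every prime super-ideal $\mathfrak{p}$ already contains all of $R_f$: given that, the decomposition \eqref{eqn: ideal} forced on any super-ideal yields $\mathfrak{p}=(\mathfrak{p}\cap R_b)\oplus(\mathfrak{p}\cap R_f)=(\mathfrak{p}\cap R_b)\oplus R_f=\Psi(\Phi(\mathfrak{p}))$. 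So the argument splits into (a) proving $R_f\subseteq\mathfrak{p}$ for every $\mathfrak{p}\in\mathrm{Spec}(R)$, which is simultaneously the ``bosonic part'' clause of the statement, and (b) checking that $\Phi$ and $\Psi$ are well defined.

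For (a) I would invoke graded-commutativity over $\mathbb{C}$: any odd $\theta\in R_f$ satisfies $\theta^2=-\theta^2$, hence $\theta^2=0$, and then $\theta\cdot\theta=0\in\mathfrak{p}$ forces $\theta\in\mathfrak{p}$ by primality; thus $R_f\subseteq\mathfrak{p}$ and, by \eqref{eqn: ideal}, $\mathfrak{p}$ is recovered from and sent to $\mathfrak{p}\cap R_b$. The same input gives the one non-formal ingredient needed for (b), namely $R_f\cdot R_f\subseteq\mathrm{nil}(R_b)$: a product $\alpha\beta$ of two odd elements is even, hence central, and $(\alpha\beta)^2=\alpha\beta\alpha\beta=-\alpha^2\beta^2=0$, so an arbitrary element of $R_f\cdot R_f$ is a finite sum of, say, $N$ pairwise-commuting square-zero elements and therefore has vanishing $(N{+}1)$-st power. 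Hence $R_f^2\subseteq\mathrm{nil}(R_b)\subseteq\mathfrak{q}$ for every prime $\mathfrak{q}\subset R_b$. I expect this last nilpotency fact --- that even elements built out of odd ones lie in the nilradical --- to be the only genuine obstacle; it is precisely where graded-commutativity and characteristic zero enter, and the rest is bookkeeping with the $\mathbb{Z}_2$-grading.

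It then remains to dispatch (b). For $\Phi$: $\mathfrak{p}\cap R_b$ is stable under multiplication by $R_b$ (since $\mathfrak{p}$ is an ideal of $R$ and a product of even elements is even), is proper as $1\notin\mathfrak{p}$, and is prime because $a,b\in R_b$ with $ab\in\mathfrak{p}\cap R_b\subseteq\mathfrak{p}$ forces $a$ or $b$ into $\mathfrak{p}$, hence into $\mathfrak{p}\cap R_b$. For $\Psi$: $\mathfrak{q}\oplus R_f$ obeys \eqref{eqn: ideal} by construction and is proper; it is an ideal of $R$ because on expanding $(r_b+r_f)(q+\theta)$ the terms $r_bq$, $r_b\theta$, $r_fq$ lie in $\mathfrak{q}$ or in $R_f$, while the even cross-term $r_f\theta$ lies in $R_f^2\subseteq\mathfrak{q}$; and it is prime because, writing $a=a_b+a_f$ and $b=b_b+b_f$, the even part of $ab$ is $a_bb_b+a_fb_f$ with $a_fb_f\in R_f^2\subseteq\mathfrak{q}$, so $ab\in\mathfrak{q}\oplus R_f$ iff $a_bb_b\in\mathfrak{q}$, whence $a_b\in\mathfrak{q}$ or $b_b\in\mathfrak{q}$ and correspondingly $a\in\mathfrak{q}\oplus R_f$ or $b\in\mathfrak{q}\oplus R_f$. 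Combining (a) and (b), $\Phi$ and $\Psi$ are mutually inverse bijections, which is the assertion of the lemma.
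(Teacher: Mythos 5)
Your proof is correct and follows essentially the same route as the paper's: the decisive step in both is that any odd $\theta$ satisfies $\theta^{2}=0$, so primality forces $R_{f}\subseteq\mathfrak{p}$ and every prime super-ideal has the form $\mathfrak{p}_{b}\oplus R_{f}$, after which the bijection with $\mathrm{Spec}(R_{b})$ is projection onto the bosonic part. You go somewhat further than the paper by explicitly checking that both maps are well defined --- that $\mathfrak{p}\cap R_{b}$ is prime in $R_{b}$ and that $\mathfrak{q}\oplus R_{f}$ is a prime super-ideal, using $R_{f}\cdot R_{f}\subseteq\mathrm{nil}(R_{b})$ --- verifications the paper leaves implicit; these additions are correct.
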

\proof  Let $\mathfrak{p}$ be a prime super-ideal of the super-ring $R$.  Then we can write
\begin{equation*}
\mathfrak{p}=(\mathfrak{p}\cap R_{b})\oplus (\mathfrak{p}\cap R_{f})\equiv \mathfrak{p}_{b}\oplus\mathfrak{p}_{f}
\end{equation*}
by the definition of a super-ideal \eqref{eqn: ideal}.  Now, for any fermionic element $\psi\in R_{f}$, it follows that $\psi^{2}=0$, which indicates that $\psi^{2}\in\mathfrak{p}$.  But $\mathfrak{p}$ is prime, so $\psi\in\mathfrak{p}$ for all $\psi\in R_{f}$.  Hence, we have
\begin{equation*}
\mathfrak{p}=(\mathfrak{p}\cap R_{b})\oplus R_{f}.
\end{equation*}
So $\mathfrak{p}_{f}=R_{f}$, and a prime super-ideal in $\mathrm{Spec}(R)$ carries no new information in its fermionic sector.  Thus, we get the isomorphism $\mathrm{Spec}(R)\cong\mathrm{Spec}(R_{b})$ by projecting out the trivial $R_{f}$ in the direct sum:
\begin{equation*}
\mathfrak{p}_{b}\oplus R_{f}\mapsto\mathfrak{p}_{b},
\end{equation*}
as required.     $\Box$

We now see that the natural definition for a locally super-ringed space is a (bosonic) topological space $X$ equipped with a structure sheaf of super-algebras $\cO_{X}$ such that the stalk of $\cO_{X}$ over each point in $X$ has the structure of a super-ring.  Although locally ringed spaces are the road to bosonic schemes in ordinary algebraic geometry (c.f., \cite{EGA:2, Hartshorne:1977}), Lemma \ref{ideals} allows us to take a much simpler approach to defining super-schemes that builds directly on the already existing theory of bosonic schemes.

\subsubsection{\textit{Super-schemes}}

We now give the definition of super-scheme that will be used in the remainder of this paper:
\begin{defn}[Super-scheme]\label{sschemes}
A \emph{super-scheme} $X$ (henceforth, a \emph{scheme}) is a pair $(X_{b}, \mathfrak{A})$, where $X_{b}$ is a bosonic $\C$-scheme in the ordinary sense and $\mathfrak{A}$ is a quasi-coherent sheaf\footnote{In the context of supersymmetry, this can be restricted to coherent sheaves, as the supersymmetry algebra is finitely generated} of superalgebras, whose bosonic part obeys $\mathfrak{A}_{b}=\cO_{X_{b}}$.
\end{defn}
So a scheme is just a bosonic scheme augmented by a structure sheaf of super-algebras; this definition coincides with that often given in the literature of super-geometry (e.g., \cite{Manin:1997}).  

We now provide some additional definitions which fill out the theory of schemes:
\begin{defn}[Morphism]\label{morph}
A \emph{morphism} of schemes $(X_{b},\mathfrak{A})\rightarrow(Y_{b},\mathfrak{B})$ is a pair $(f,\phi)$, where $f: X_{b}\rightarrow Y_{b}$ is a morphism of bosonic $\C$-schemes and $\phi: \mathfrak{B}\rightarrow f_{*}\mathfrak{A}$ is a morphism of super-algebras.
\end{defn}
\begin{defn}[Sub-scheme]\label{subsch}
A \emph{sub-scheme} $X$ of a scheme $Y=(Y_{b},\mathfrak{B})$ is a scheme $(X_{b},\mathfrak{A})$ equipped with a pair $(f,\phi)$ such that $f: X_{b}\hookrightarrow Y_{b}$ is a closed immersion and $\phi: \mathfrak{B}\rightarrow f_{*}\mathfrak{A}$ is surjective.
\end{defn}

These notions now provide us with a well-defined category of schemes, which we denote as $\mathsf{Sch}$ (the category of bosonic schemes is denoted $\mathrm{B}\mathsf{Sch}$).  Now, any $X\in\ob(\mathsf{Sch})$ comes equipped with a natural forgetful functor $\mathsf{Sch}\rightarrow\mathrm{B}\mathsf{Sch}$ which simply projects to the underlying bosonic scheme:
\begin{equation*}
X=(X_{b},\mathfrak{A})\mapsto X_{b},
\end{equation*}
but there is another important way in which bosonic schemes may be obtained from schemes.  We consider first the case of an affine scheme (i.e., one which is of the form $\mathrm{Spec}(R)$ for some super-ring $R$), and trust that the reader is capable of extending the arguments to a general scheme.  

Note that in any super-ring $R=R_{b}\oplus R_{f}$, the bosonic portion $R_{b}$ must contain nilpotent elements of the form $\psi\chi$, where $\psi, \chi\in R_{f}$.  In general such elements are bosonic and non-zero in $R_{b}$, but must of course square to zero.  In all physical applications, sections of this sort do not appear in the bosonic portion of the effective field theory; that is, after integrating out fermionic degrees of freedom in any Lagrangian, we are left with objects that are ``purely bosonic'' in the sense that they are not inherited from the fermions in the manner just described.  We therefore consider a construction which identifies elements in a super-ring obtained in this fashion with zero:

\begin{defn}[Bosonic truncation]\label{bfun}
Let $R=R_{b}\oplus R_{f}$ be a super-ring.  The ring $\tau_b(R)$ is defined as $R/(R\cdot R_{f})$.  For a scheme $X=(X_{b},\mathfrak{A})$, this allows us to define $\tau_b(X)$, as schemes are glued from affine schemes.\footnote{An equivalent construction for supermanifolds is given in \cite{Manin:1997}.}
\end{defn}
Given $X\in\ob(\mathsf{Sch})$ and $Y\in\ob(\mathrm{B}\mathsf{Sch})$, we say that $X$ is a \textit{superization} of $Y$ if $\tau_b(X)\cong Y$. 

We now establish several important facts about $\tau_b$, beginning with its universal property:

\begin{lemma}\label{univ}
Let $Y\rightarrow X$ be a morphism of schemes, where $Y=Y_{b}$ is a bosonic scheme.  Then there exists a unique morphism $Y\rightarrow\tau_b(X)$ such that the composition $Y\rightarrow\tau_b(X)\rightarrow X$ agrees with the given morphism $Y\rightarrow X$.  In particular,
\begin{equation*}
\Hom(Y,X)=\Hom(Y,\tau_b(X)).
\end{equation*}
\end{lemma}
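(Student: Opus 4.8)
The plan is to reduce immediately to the affine case and then to a direct check at the level of rings, using Lemma~\ref{ideals} to avoid worrying about the underlying topological spaces. Write $X = (X_b, \mathfrak{A})$; since $Y$ is a bosonic scheme and morphisms of schemes are built from morphisms of affine pieces glued along a cover, it suffices to treat $X = \mathrm{Spec}(R)$ affine with $R = R_b \oplus R_f$ a super-ring, and correspondingly $Y = \mathrm{Spec}(S)$ with $S = S_b$ purely bosonic. A morphism $Y \to X$ is then (by Definition~\ref{morph}) a pair $(f, \phi)$ with $\phi \colon R \to S$ a morphism of super-algebras; the key observation is that any such $\phi$ must kill $R_f$, since the image of a fermionic element is a fermionic element of $S$, and $S$ has no nonzero fermionic elements. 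Hence $\phi$ kills the ideal $R \cdot R_f$, and therefore factors uniquely through the quotient $R \twoheadrightarrow R/(R\cdot R_f) = \tau_b(R)$. Translating back to geometry, this says exactly that $Y \to X$ factors uniquely as $Y \to \tau_b(X) \to X$.

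The first step I would carry out carefully is this factorization-of-ring-maps argument in the affine case, including the verification that $R \cdot R_f$ is a super-ideal (so that $\tau_b(R)$ is again a super-ring, indeed a bosonic ring, and $\mathrm{Spec}(\tau_b(R))$ makes sense as a scheme). Second, I would note that the canonical surjection $R \to \tau_b(R)$ induces the structural morphism $\tau_b(X) \to X$, and that the composite $Y \to \tau_b(X) \to X$ recovers the original $\phi$ by construction. Third, uniqueness is automatic because $R \to \tau_b(R)$ is an epimorphism of rings (being surjective), so any two factorizations agree. Fourth, I would globalize: choose an affine cover $\{U_i = \mathrm{Spec}(R_i)\}$ of $X$, pull it back to an open cover of $Y$, apply the affine statement on each piece, and observe that the resulting local factorizations agree on overlaps by the affine uniqueness, hence glue to a global morphism $Y \to \tau_b(X)$; uniqueness globally follows from uniqueness locally. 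The final clause $\Hom(Y,X) = \Hom(Y, \tau_b(X))$ is then just a restatement: composition with $\tau_b(X) \to X$ is a bijection, with inverse given by the factorization.

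The main obstacle is essentially bookkeeping rather than substance: one must be careful that $\tau_b$ as defined on schemes (Definition~\ref{bfun}) is genuinely well-defined, i.e.\ that the affine-local construction $R \mapsto R/(R\cdot R_f)$ is compatible with localization and gluing, so that $\tau_b(X)$ is a well-defined bosonic scheme with a canonical map to $X$ (as the underlying bosonic scheme); and that the compatibility of the local factorizations on double overlaps is phrased correctly. A minor point to get right is that $Y$ being bosonic is used twice — once to see $\phi(R_f) = 0$, and once implicitly because a morphism out of a bosonic scheme only sees the ``reduced-in-the-fermionic-direction'' quotient of the target. I expect no genuine difficulty beyond making these compatibilities explicit; the heart of the argument is the one-line observation that fermionic elements have nowhere to go in a bosonic ring.
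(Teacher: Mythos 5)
Your argument is correct and follows essentially the same route as the paper: reduce to the affine case and observe that a super-ring homomorphism into a bosonic ring must annihilate all fermionic elements, hence the ideal $R\cdot R_f$, giving the unique factorization through $\tau_b(R)=R/(R\cdot R_f)$. Your additional care about gluing, well-definedness of $\tau_b$, and uniqueness via surjectivity of $R\to\tau_b(R)$ only makes explicit what the paper leaves implicit.
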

\proof The lemma is proved for all schemes by proving it for affine schemes.  In this case, we need only show that for a morphism of super-rings $f:A\rightarrow B$, where $B$ is bosonic, we obtain a unique factorization $A\rightarrow\tau_b(A)\rightarrow B$.  As $f$ must map all fermionic elements of $A$ to zero, all elements of $A\cdot A_{f}$ must be mapped to zero.  Hence, we obtain the required factorization map:
\begin{equation*}
\tau_b(A)=A/(A\cdot A_{f})\rightarrow B,
\end{equation*}
which completes the proof.     $\Box$

We know that $\tau_b$ maps schemes to bosonic schemes, but how does it act on morphisms of schemes?  The following result confirms that $\tau_b$ indeed acts as a functor.
\begin{lemma}\label{func}
The map $\tau_b$ can be extended to a functor $\tau_b:\mathsf{Sch}\rightarrow\mathrm{B}\mathsf{Sch}$.
\end{lemma}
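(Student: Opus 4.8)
The plan is to produce the action of $\tau_b$ on morphisms, verify the functor axioms in the affine case, and then globalize by gluing; Lemma \ref{univ} will carry most of the conceptual weight.

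First, the affine case. By Definition \ref{morph} a morphism of affine schemes $\mathrm{Spec}(B)\to\mathrm{Spec}(A)$ amounts to a morphism of super-rings $f\colon A\to B$, and any such morphism respects the $\Z_2$-grading, so $f(A_f)\subseteq B_f$ and hence $f(A\cdot A_f)\subseteq B\cdot B_f$. Therefore $f$ descends to a ring homomorphism $\tau_b(f)\colon \tau_b(A)=A/(A\cdot A_f)\to B/(B\cdot B_f)=\tau_b(B)$, i.e.\ a morphism $\tau_b(\mathrm{Spec}\,B)\to\tau_b(\mathrm{Spec}\,A)$ of bosonic schemes. That $\tau_b(\mathrm{id})=\mathrm{id}$ and $\tau_b(g\circ f)=\tau_b(g)\circ\tau_b(f)$ is immediate from the universal property of the quotient (equivalently, from Lemma \ref{univ}), so $\tau_b$ is a functor on affine schemes.

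To pass to arbitrary schemes I would first record that $\tau_b$ commutes with localization: for $s\in A_b$ one has $\tau_b(A_s)\cong \tau_b(A)_{\bar s}$, since localization is exact and $(A\cdot A_f)_s=A_s\cdot(A_s)_f$. Given $X=(X_b,\mathfrak A)$, choose an affine cover $\{U_i=\mathrm{Spec}(A_i)\}$ whose pairwise intersections are themselves covered by distinguished affine opens; the bosonic schemes $\tau_b(U_i)$ then patch along $\tau_b(U_i\cap U_j)$ (via the localization identity) to recover the object $\tau_b(X)$ of Definition \ref{bfun}. A morphism $(f,\phi)\colon X\to Y$ restricts over such affine pieces to super-ring maps, to which the affine construction applies, and the localization identity again guarantees that these local pieces agree on overlaps, hence glue to a single morphism $\tau_b(f,\phi)\colon \tau_b(X)\to\tau_b(Y)$. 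Functoriality holds because it can be checked on an affine cover, where it reduces to the affine case already treated.

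Finally, I would point out the shortcut implicit in Lemma \ref{univ}: that lemma says precisely that $\tau_b$ is right adjoint to the fully faithful inclusion $\mathrm{B}\mathsf{Sch}\hookrightarrow\mathsf{Sch}$, $Y_b\mapsto(Y_b,\cO_{Y_b})$, and a right adjoint is automatically a functor, so one obtains $\tau_b$ on morphisms together with all the functor axioms in one stroke, and then need only check that this agrees with the gluing description above. The sole genuine labour is the bookkeeping in the gluing step — verifying that the affine-local maps agree on overlaps — and the localization identity $\tau_b(A_s)\cong\tau_b(A)_{\bar s}$ is exactly what renders it routine; I expect this to be the main obstacle, modest though it is.
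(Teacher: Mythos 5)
Your proposal is correct, and its closing ``shortcut'' is in fact the entirety of the paper's own proof: the paper simply observes that for $f\colon Y\to X$ the composite $\tau_b(Y)\to Y\to X$ is a morphism from a bosonic scheme into $X$, so Lemma \ref{univ} produces a unique factorization $\tau_b(Y)\to\tau_b(X)$, and the uniqueness clause forces preservation of identities and composition --- i.e.\ the right-adjoint argument you relegate to the last paragraph. What you treat as the main body (descending super-ring maps to the quotients $A/(A\cdot A_f)$, checking $\tau_b(A_s)\cong\tau_b(A)_{\bar s}$, and gluing over an affine cover) is a perfectly sound, more explicit construction that the paper's argument sidesteps entirely; its advantage is that it exhibits $\tau_b(f)$ concretely rather than by universal property, at the cost of the overlap bookkeeping you correctly identify as the only real labour. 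Note also that Lemma \ref{univ} is itself proved only in the affine case in the paper (``the lemma is proved for all schemes by proving it for affine schemes''), so your localization-and-gluing step is essentially the content one would need to make either route fully airtight; in that sense your write-up is, if anything, more complete than the paper's.
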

\proof  We must check that for a given morphism of schemes $f:Y\rightarrow X$, there is a morphism of bosonic schemes $\tau_b(f):\tau_b(Y)\rightarrow\tau_b(X)$ which preserves identities and respects composition.  This is immediate from Lemma \ref{univ} though, as $Y\rightarrow X\rightarrow\tau_b(X)$ factorizes into $\tau_b(Y)\rightarrow\tau_b(X)$, and the universal property implies that identity and composition are preserved.     $\Box$

Finally, the universal property and the locality of fiber product confirm immediately that the functor $\tau_b$ respects the fiber product: 
\begin{lemma}\label{fp}
Let $Y\rightarrow X$ and $Z\rightarrow X$ be morphisms of schemes.  Then there is a natural isomorphism
\begin{equation*}
\tau_b(Y)\times_{\tau_b(X)}\tau_b(Z)\rightarrow \tau_b\left(Y\times_{X}Z\right).
\end{equation*}
\end{lemma}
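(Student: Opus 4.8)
The plan is to produce the comparison morphism canonically — from functoriality (Lemma~\ref{func}) together with the two universal properties in play, namely that of $\tau_b$ (Lemma~\ref{univ}) and that of the fibre product — and then to verify it is an isomorphism by testing against bosonic schemes (or, equivalently, by reducing to the affine case and computing with super-rings).

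First I would observe that $\tau_b(Y)\times_{\tau_b(X)}\tau_b(Z)$ is a bosonic scheme, being a fibre product of bosonic schemes in $\mathrm{B}\mathsf{Sch}$. Applying $\tau_b$ to the projections $Y\times_X Z\to Y$ and $Y\times_X Z\to Z$, and to the commuting square they sit in, yields morphisms $\tau_b(Y\times_X Z)\to\tau_b(Y)$ and $\tau_b(Y\times_X Z)\to\tau_b(Z)$ agreeing after composition down to $\tau_b(X)$; the universal property of the fibre product then gives a canonical morphism
\begin{equation*}
\Phi:\tau_b(Y\times_X Z)\longrightarrow\tau_b(Y)\times_{\tau_b(X)}\tau_b(Z).
\end{equation*}
Conversely, writing $W:=\tau_b(Y)\times_{\tau_b(X)}\tau_b(Z)$, the composites $W\to\tau_b(Y)\to Y$ and $W\to\tau_b(Z)\to Z$ agree over $X$ (they already agree over $\tau_b(X)$, hence over $X$), so the universal property of $Y\times_X Z$ in $\mathsf{Sch}$ produces a morphism $W\to Y\times_X Z$; since $W$ is bosonic, Lemma~\ref{univ} factors this uniquely through $\tau_b(Y\times_X Z)$, giving $\Psi:W\to\tau_b(Y\times_X Z)$.

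To see that $\Phi$ and $\Psi$ are mutually inverse I would argue by Yoneda on $\mathrm{B}\mathsf{Sch}$: for a bosonic scheme $T$, chaining Lemma~\ref{univ}, the universal property of fibre products, and Lemma~\ref{univ} three more times gives natural bijections
\begin{equation*}
\Hom(T,\tau_b(Y\times_X Z))\cong\Hom(T,Y\times_X Z)\cong\Hom(T,Y)\times_{\Hom(T,X)}\Hom(T,Z)\cong\Hom\big(T,\tau_b(Y)\times_{\tau_b(X)}\tau_b(Z)\big),
\end{equation*}
which one checks is the map induced by $\Phi$; hence $\Phi$ is an isomorphism, natural in $X,Y,Z$. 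Alternatively — and this is what the phrase ``locality of the fibre product'' points to — one reduces to the affine case, where $Y\times_X Z$ corresponds to a tensor product of super-rings, verifies directly that $\tau_b(A\otimes_C B)\cong\tau_b(A)\otimes_{\tau_b(C)}\tau_b(B)$, and glues.

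The only point requiring care — the ``main obstacle,'' though a mild one — is the compatibility of $\tau_b$ with base change at the level of rings: in the affine approach one must check that the image of the ideal $(A\otimes_C B)\cdot(A\otimes_C B)_f$ in $\tau_b(A)\otimes_{\tau_b(C)}\tau_b(B)$ is exactly the ideal generated by the images of the fermions, so that no spurious relations are introduced or lost when passing the quotient through the tensor product; and in the global approach one must ensure the local isomorphisms are independent of the chosen affine covers, so that they glue to a well-defined $\Phi$. Both checks are taken care of automatically by the naturality built into the universal-property construction of $\Phi$ above, which is why the result ``confirms immediately.''
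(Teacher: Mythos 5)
Your proof is correct and follows exactly the route the paper indicates (the paper offers no written proof, only the remark that the claim follows from the universal property of $\tau_b$ and the locality of the fibre product): you use Lemma~\ref{univ} to build the comparison maps and then verify the isomorphism either by Yoneda on $\mathrm{B}\mathsf{Sch}$ or by the affine computation $\tau_b(A\otimes_C B)\cong\tau_b(A)\otimes_{\tau_b(C)}\tau_b(B)$. Both of your verifications are valid fleshings-out of the paper's one-line justification, so nothing further is needed.
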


Every bosonic scheme gives rise trivially to a scheme.  In the spirit of Grothendieck's relativization of absolute notions, we look to extend the notion of bosonic scheme to a property of morphisms between schemes:
\begin{defn}[Bosonic morphism]\label{bosonic}
A morphism of schemes $Y\rightarrow X$ is called \emph{bosonic} if for every bosonic scheme $U_{b}$ together with a morphism $U_{b}\rightarrow X$, the base-change $Y\times_{X}U_{b}$ is a bosonic scheme.
\end{defn}

We then obtain an important result linking bosonic morphisms of schemes with the $\tau_b$ functor:
\begin{lemma}\label{bpi}
Let $Y\rightarrow X$ be a bosonic morphism of schemes.  Then
\begin{equation*}
\tau_b(X)\times_{X}Y\cong\tau_b(Y).
\end{equation*}
\end{lemma}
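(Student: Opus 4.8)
The plan is to reduce to the affine case and then run an explicit super-ideal computation, the crucial input being Definition \ref{bosonic} applied to the canonical morphism $\tau_b(X)\to X$.

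First I would note that the assertion is local on both $X$ and $Y$: by Definition \ref{bfun} the truncation $\tau_b$ is built affine-locally, and fibre products are local, so it suffices to treat $X=\mathrm{Spec}(A)$ and $Y=\mathrm{Spec}(B)$ for a morphism of super-rings $A\to B$. There $\tau_b(X)=\mathrm{Spec}\big(A/(A\cdot A_f)\big)$, and the structure-sheaf surjection $A\to A/(A\cdot A_f)$ realises $\tau_b(X)$ as a (bosonic) sub-scheme of $X$ in the sense of Definition \ref{subsch}; in particular $\tau_b(X)\to X$ is a morphism out of a bosonic scheme, so the bosonic hypothesis on $Y\to X$ may be applied to it.

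The heart of the argument is then the following. Taking $U_b=\tau_b(X)$ in Definition \ref{bosonic}, the base-change $Y\times_X\tau_b(X)=\mathrm{Spec}\big(B\otimes_A A/(A\cdot A_f)\big)=\mathrm{Spec}\big(B/(A_f\cdot B)\big)$ is by hypothesis a bosonic scheme, i.e.\ the super-ring $B/(A_f\cdot B)$ has trivial fermionic part. Since $A_f\cdot B$ is a homogeneous (super-)ideal --- the image of $A_f$ consists of odd elements of $B$ --- this vanishing is exactly the inclusion $B_f\subseteq A_f\cdot B$. The reverse inclusion $B_f\cdot B\subseteq A_f\cdot B$ then follows since $A_f\cdot B$ is an ideal containing $B_f$, while $A_f\cdot B\subseteq B_f\cdot B$ is automatic; hence $A_f\cdot B=B_f\cdot B=B\cdot B_f$. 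Therefore $B/(A_f\cdot B)=B/(B\cdot B_f)=\tau_b(B)$, giving the isomorphism $\tau_b(X)\times_X Y\cong\tau_b(Y)$ over each affine, after which one glues --- the gluing data match by functoriality of $\tau_b$ (Lemma \ref{func}) and the uniqueness in Lemma \ref{univ}.

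I expect the only real subtlety to be the $\Z_2$-graded bookkeeping: one must be sure that $A_f\cdot B$ is genuinely a super-ideal so that ``$B/(A_f\cdot B)$ bosonic'' unwinds precisely to $B_f\subseteq A_f\cdot B$, and that the quotient one lands on is $\tau_b(B)$ on the nose rather than a proper further quotient. A coordinate-free alternative sidesteps this: using Lemma \ref{univ} one constructs $\tau_b(Y)\to\tau_b(X)\times_X Y$ from the factorisation of $\tau_b(Y)\to Y\to X$ through $\tau_b(X)$ together with $\tau_b(Y)\to Y$, and --- since $\tau_b(X)\times_X Y$ is bosonic --- a morphism back $\tau_b(X)\times_X Y\to\tau_b(Y)$; the uniqueness clauses of Lemma \ref{univ} and of the fibre-product universal property then force the two composites to be identities. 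Either way, the bosonic hypothesis is precisely what allows $A_f\cdot B$ to be traded for $B\cdot B_f$.
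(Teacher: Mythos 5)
Your proposal is correct, and your primary route is genuinely different from the paper's. The paper proves Lemma \ref{bpi} purely formally: it observes that $\tau_b(X)\times_X Y$ is bosonic (Definition \ref{bosonic} applied to $\tau_b(X)\to X$), uses the universal property of Lemma \ref{univ} on the projection $\tau_b(X)\times_X Y\to Y$ to get a map to $\tau_b(Y)$, builds the reverse map from the universal property of the fibre product applied to $\tau_b(Y)\to\tau_b(X)$ and $\tau_b(Y)\to Y$, and concludes the two are mutually inverse by uniqueness --- this is exactly the ``coordinate-free alternative'' you sketch in your last paragraph. Your main argument instead localises to $\mathrm{Spec}(A)\to\mathrm{Spec}(B)$ and identifies the two ideals explicitly: bosonicity of $B/(A_f\cdot B)$ gives $B_f\subseteq A_f\cdot B$, the graded morphism gives $A_f\cdot B\subseteq B\cdot B_f$, hence $A_f\cdot B=B\cdot B_f$ and $B\otimes_A\tau_b(A)=\tau_b(B)$. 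The computation is sound (the ideal $A_f\cdot B$ is indeed homogeneous, and the tensor product here is an honest quotient of $B$, so no Koszul-sign issues arise), and it buys something the paper's proof does not: it exhibits concretely \emph{which} ideal cuts out $\tau_b(Y)$ inside $Y$ and shows that the bosonic hypothesis is precisely the statement $B_f\subseteq A_f\cdot B$ --- essentially the mechanism that the paper only exploits later, in Lemma \ref{flat2}. The paper's argument is shorter, avoids the affine reduction and gluing entirely, and generalises verbatim to non-affine situations; your version is more informative but requires the (routine, and correctly noted) checks that bosonicity restricts to opens and that the gluing data match via Lemma \ref{func}.
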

\proof  By Definition \ref{bosonic}, it follows that $\tau_b(X)\times_{X}Y$ is bosonic by assumption.  From the natural morphism $\tau_b(X)\times_{X}Y\rightarrow Y$ and Lemma \ref{univ}, it follows that there is a canonical morphism $\tau_b(X)\times_{X}Y\rightarrow\tau_b(Y)$, and by the universal property of fiber products, there is also a natural morphism $\tau_b(Y)\rightarrow\tau_b(X)\times_{X}Y$.  Due to the universal properties that both morphisms satisfy, it follows that they are mutually inverse, and hence $\tau_b(X)\times_{X}Y\cong\tau_b(Y)$.     $\Box$

Finally, we provide a superized notion of finite presentation for the structure sheaf of super-algebras of a scheme:
\begin{defn}[Fermionically of Finite Presentation]\label{ffp}
A scheme $X=(X_{b},\mathfrak{A})$ is called \emph{fermionically of finite presentation} if $\mathfrak{A}_{f}$ is coherent as a $\mathfrak{A}_{b}$-module.
\end{defn}

We often use these properties of the $\tau_b$ functor to define concepts for schemes in terms of the underlying concept for bosonic schemes.  For instance, we say that a scheme $X$ is \emph{projective} if it is fermionically of finite presentation and $\tau_b(X)$ is a projective bosonic scheme.  The category of projective schemes will be denoted $\P\mathsf{Sch}$. 

In the following definition we record the observation that we can take the relative spectrum of a sheaf of super-algebras on a bosonic scheme to obtain a super-scheme. It is constructed by taking $\mathrm{Spec}$ of a super-algebra Zariski locally and gluing the schemes together.

\begin{defn}[Relative spectrum, Split scheme]
Let $X$ be a bosonic scheme and $\cA$ a sheaf of super-algebras on $X_b$. Then there exists a scheme $\mathrm{Spec}_{X}(\cA)$ over $X$ called the \emph{relative spectrum} of $\cA$. A super-scheme $Y$ is called \emph{split}, if there exists a coherent locally free sheaf $\mathcal{V}$ on $\tau_b(Y)$ such that $Y \cong \mathrm{Spec}_{\tau_{b}(Y)}(\wedge^{\bullet} \mathcal{V})$.
\end{defn}

We can apply this definition to an easy example with important applications in twistor-string theory:

\subsubsection*{\textit{Example:} $\P^{m|n}$}

We define $\P^{m|n}$ and show that it is a superization of $\P^{m}$. As an analytic super-manifold, we can chart $\P^{m|n}$ with local coordinates
\begin{equation*}
(Z^{1},\ldots, Z^{m+1}, \psi^{1},\ldots, \psi^{n}),
\end{equation*}
where the $Z^{\alpha}$ are the ordinary homogeneous coordinates on $\P^{m}$ and the $\psi^{i}$ are anti-commuting fermionic coordinates.  We consider a sheaf of super-algebras on $\P^m$
\be{eqn: ss}
\mathfrak{A}_{\P^{m|n}}=\left(\bigoplus_{k=0}^{n}\wedge^{k}\cO_{\P^{m}}(-1)^{\oplus n}\right).
\ee
Clearly $\tau_{b}(\mathfrak{A}_{\P^{m|n}})=\cO_{\P^{m}}$, so define $\P^{m|n}$ to be the relative spectrum of $\mathfrak{A}_{\P^{m|n}}$:
\begin{equation*}
\P^{m|n}\cong \mathrm{Spec}_{\P^{m}}(\mathfrak{A}_{\P^{m|n}}).
\end{equation*}
Note that although $\tau_b(\P^{m|n}) =  \P^m$, the underlying bosonic scheme of $\P^{m|n}$ is different from $\P^m$, since locally its algebra of functions contains nilpotent elements.  Furthermore, $\P^{m|n}$ endowed with the structure sheaf of super-algebras \eqref{eqn: ss} is trivially a projective scheme, and is fermionically of finite presentation.

\subsubsection{\textit{Local Properties of Morphisms}}

In the study of bosonic algebraic geometry, properties of morphisms such as \emph{smooth} or \emph{\'{e}tale} are essential for learning about relationships between objects in that theory.  While formal definitions of these properties can be found in \cite{Hartshorne:1977}, there is an easy analogy in the category of smooth schemes: a smooth morphism corresponds to a submersion, while an \'{e}tale morphism corresponds to a local diffeomorphism.  We now extend these notions from the world of bosonic algebraic geometry to algebraic super-geometry, assuming that all schemes and morphisms are locally of finite type over $\C$.

\begin{defn}[Smooth scheme]
A scheme is called \emph{smooth} if the morphism $\tau_b(X) \to X$ admits a left-inverse $g: X \to \tau_b(X)$ Zariski-locally on $X$, and we can express $X$ with respect to $g$ as $\mathrm{Spec}_{\tau_b(X)}(\wedge^{\bullet} \mathcal{V}),$ where $\mathcal{V}$ is a locally free sheaf (defined on a Zariski open subset $U \subset \tau_b(X)$).
\end{defn}

We would like to say that a morphism $Y \to X$ is smooth if all fibres are smooth schemes. In order for this to be a sensible definition, we need to introduce the notion of flat morphisms.

If $R$ is a super-ring, then an $R$-module $M$ is said to be flat if the functor $-\otimes_{R}M$ is exact (i.e., sends short exact sequences to short exact sequences).  A morphism of rings $A\rightarrow B$ is then called flat if $B$ is flat as an $A$-module.
\begin{defn}[Flat morphism]\label{flatmorph}
Let $X=(X_{b},\mathfrak{A})$ and $Y=(Y_{b},\mathfrak{B})$ be schemes and $f:Y\rightarrow X$ be a morphism.  Such a morphism of schemes is called \emph{flat} if, for every $y\in Y$, the induced morphism of local rings $\mathfrak{A}_{f(y)}\rightarrow \mathfrak{B}_{y}$ is flat.
\end{defn}

From this definition, we state the following two facts about flat morphisms:
\begin{lemma}\label{flat1}
Let $f:Y\rightarrow X$ be a closed immersion corresponding to the ideal sheaf $\mathcal{I}_{X}$, and $g: Z\rightarrow X$ a flat morphism.  Then the pullback $Y\times_{X}Z$ is the closed immersion corresponding to $\mathcal{I}_{Y}=f^{*}\mathcal{I}_{X}$.
\end{lemma}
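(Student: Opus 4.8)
The plan is to reduce to the affine case and then run the classical bosonic argument verbatim in the $\Z_2$-graded setting, since every ingredient it uses — quasi-coherent sheaves, the tensor product, and flatness — behaves for super-rings exactly as for ordinary rings. First I would note that the statement is local on both $X$ and $Z$: the ideal sheaf $\mathcal{I}_X = \ker(\mathfrak{A}\to f_*\mathfrak{B})$ of the closed immersion $f\colon Y\hookrightarrow X$ is quasi-coherent (Definitions \ref{sschemes}, \ref{subsch}); the fiber product of schemes is obtained by gluing $\mathrm{Spec}$ of tensor products of super-rings; and flatness of $g$ is tested on stalks (Definition \ref{flatmorph}). Hence it suffices to treat $X=\mathrm{Spec}(A)$, $Y=\mathrm{Spec}(A/I)$ for a super-ideal $I\subseteq A$, and $g\colon \mathrm{Spec}(C)\to\mathrm{Spec}(A)$ with $C$ flat over $A$ — or, even more directly, to argue on the stalk $\mathfrak{C}_z$ at a point $z\in Z$ lying over $x=g(z)\in X$, where $\mathfrak{A}_x\to\mathfrak{C}_z$ is flat and $I_x\subseteq\mathfrak{A}_x$ is the stalk of $\mathcal{I}_X$.

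Next I would write down the short exact sequence of $A$-supermodules
\begin{equation*}
0 \longrightarrow I \longrightarrow A \longrightarrow A/I \longrightarrow 0
\end{equation*}
and apply $-\otimes_A C$. The category of super-modules over a super-ring is abelian with the evident graded tensor product, so ``$C$ flat over $A$'' means precisely that this functor is exact; therefore
\begin{equation*}
0 \longrightarrow I\otimes_A C \longrightarrow C \longrightarrow (A/I)\otimes_A C \longrightarrow 0
\end{equation*}
is again exact. The map $I\otimes_A C\to C$ is thus injective, its image is by definition the extended ideal $IC = g^*\mathcal{I}_X$, and the cokernel $(A/I)\otimes_A C = C/IC$ is the super-ring whose spectrum is $Y\times_X Z=\mathrm{Spec}(A/I)\times_{\mathrm{Spec}(A)}\mathrm{Spec}(C)$. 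Consequently $Y\times_X Z\to Z$ is a closed immersion whose ideal is canonically identified with the pulled-back sheaf $g^*\mathcal{I}_X$ itself, not merely with the image of $g^*\mathcal{I}_X\to\mathcal{O}_Z$ (which is all one can conclude without flatness).

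Finally I would reassemble: over an affine cover of $X$ together with a compatible affine cover of $Z$, the locally defined closed immersions and the identifications of their ideal sheaves with $g^*\mathcal{I}_X$ are canonical, hence agree on overlaps, giving the global statement that $Y\times_X Z$ is the closed subscheme of $Z$ cut out by $\mathcal{I}_Y = g^*\mathcal{I}_X$. I expect the only real subtlety to be a bookkeeping one rather than a mathematical obstacle: checking that the fiber product of super-schemes is indeed computed affine-locally by the tensor product of the corresponding super-rings, and that this formation is compatible with closed immersions in the sense of Definition \ref{subsch}. Once that is in place, the fermionic directions contribute nothing new and the proof is the exact graded analogue of the classical one.
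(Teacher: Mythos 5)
Your proposal is correct and follows essentially the same route as the paper: reduce to the affine case, tensor the short exact sequence $0\to I\to A\to A/I\to 0$ with the flat $A$-algebra $C$, and identify the kernel of the surjection $C\to (A/I)\otimes_A C$ with $I\otimes_A C$, i.e.\ with the pulled-back ideal. Your version is somewhat more explicit about the gluing and about the point that flatness identifies the extended ideal with the genuine pullback (rather than merely its image), but the mathematical content coincides with the paper's argument.
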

\proof  Without loss of generality, we consider the affine case, where $f$ corresponds to a surjective morphism of rings $A\rightarrow B$ and $g$ is a flat morphism of rings $A\rightarrow C$.  Then the base change at the level of schemes induces a surjective morphism of rings $C\rightarrow B\otimes_{A} C$, whose kernel is precisely $I\otimes_{A} C$ as $A\rightarrow C$ is flat.  The tensoring of modules corresponds to pullback, and we have the desired result.     $\Box$

\begin{lemma}\label{flat2}
Let $g:Y\rightarrow X$ be a bosonic and flat morphism of schemes, with $X=(X_{b},\mathfrak{A})$, $Y=(Y_{b},\mathfrak{B})$.  Then the fermionic portion of $\mathfrak{B}$ is given by $\mathfrak{B}_{f}=g^*\mathfrak{A}_{f}$, and moreover $\mathfrak{B}^{2}_{f}=g^*\mathfrak{A}^{2}_{f}$. In particular we have that $Y \cong X \times_{X_b} Y_b$.
\end{lemma}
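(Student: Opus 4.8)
The plan is to reduce everything to the affine case and analyse the fermionic ideal directly. Write $X = \mathrm{Spec}(A)$ with $A = A_{b}\oplus A_{f}$, $Y = \mathrm{Spec}(B)$ with $B = B_{b}\oplus B_{f}$, and let $\phi: A\to B$ be the corresponding flat ring homomorphism; set $\mathfrak{a} = A\cdot A_{f}$ and $\mathfrak{b} = B\cdot B_{f}$, so that $\tau_b(A) = A/\mathfrak{a}$ and $\tau_b(B) = B/\mathfrak{b}$. Since $g$ is bosonic, Lemma \ref{bpi} gives $\tau_b(A)\otimes_{A}B \cong \tau_b(Y) = \tau_b(B)$, i.e. $\mathfrak{a}B = \mathfrak{b}$. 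In these terms the assertion $\mathfrak{B}_{f} = g^{*}\mathfrak{A}_{f}$ says exactly that the natural multiplication map $A_{f}\otimes_{A_{b}}B_{b}\to B_{f}$ is an isomorphism, equivalently that $A\otimes_{A_{b}}B_{b}\to B$ is an isomorphism, and this last statement is precisely $Y\cong X\times_{X_{b}}Y_{b}$. So it suffices to prove this one map is an isomorphism and then to check it is a ring isomorphism.

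First, surjectivity is immediate: $\mathfrak{a}$ is generated by the odd elements $A_{f}$, so comparing odd parts in $\mathfrak{a}B = \mathfrak{b}$ gives $B_{f} = B_{b}\cdot\phi(A_{f})$, which is exactly surjectivity of $A_{f}\otimes_{A_{b}}B_{b}\to B_{f}$. Squaring this identity yields $\mathfrak{B}_{f}^{2} = \phi(\mathfrak{A}_{f}^{2})\cdot B_{b}$ (so $g^{*}\mathfrak{A}_{f}^{2}\to\mathfrak{B}_{f}^{2}$ is surjective too) and, as a consequence, the ring identity $\tau_b(A)\otimes_{A_{b}}B_{b} = B_{b}/\phi(\mathfrak{A}_{f}^{2})B_{b} = \tau_b(B)$, which will be needed below.

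The content is injectivity, and flatness enters through the infinitesimal neighbourhoods of $\tau_b(X)$ inside $X$. For every $k\geq 1$, applying Lemma \ref{flat1} to the closed immersion $\mathrm{Spec}(A/\mathfrak{a}^{k})\hookrightarrow X$ and the flat morphism $g$ shows that its pullback is $\mathrm{Spec}(B/\mathfrak{a}^{k}B)\hookrightarrow Y$ with ideal $\mathfrak{a}^{k}\otimes_{A}B\cong\mathfrak{a}^{k}B$; since $\mathfrak{a}B = \mathfrak{b}$ forces $\mathfrak{a}^{k}B = \mathfrak{b}^{k}$, this gives natural isomorphisms $\mathfrak{b}^{k}/\mathfrak{b}^{k+1}\cong(\mathfrak{a}^{k}/\mathfrak{a}^{k+1})\otimes_{\tau_b(A)}\tau_b(B)$. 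Each $\mathfrak{a}^{k}/\mathfrak{a}^{k+1}$ is purely even or purely odd and is canonically the $k$-th graded piece of the decreasing filtration of $A$ given by $\C$-linear spans of $k$-fold products of elements of $A_{f}$; matching this against the analogous filtration on $B$ and feeding in the identity $\tau_b(A)\otimes_{A_{b}}B_{b} = \tau_b(B)$ from the previous step, one checks that the surjection $A_{f}\otimes_{A_{b}}B_{b}\to B_{f}$ is filtered and is an isomorphism on every associated graded piece. Because $X$ and $Y$ are locally of finite type, the odd generators are nilpotent and both filtrations are finite, so the map is an isomorphism; the same argument on the even piece gives $\mathfrak{B}_{f}^{2} = g^{*}\mathfrak{A}_{f}^{2}$. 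Finally $\mathfrak{B} = \cO_{Y_{b}}\oplus\mathfrak{B}_{f}\cong \cO_{Y_{b}}\otimes_{\cO_{X_{b}}}(\cO_{X_{b}}\oplus\mathfrak{A}_{f}) = \mathfrak{A}\otimes_{\cO_{X_{b}}}\cO_{Y_{b}}$, with $\mathfrak{B}_{f}^{2} = g^{*}\mathfrak{A}_{f}^{2}$ ensuring compatibility with multiplication, so $Y\cong X\times_{X_{b}}Y_{b}$; the isomorphism is canonical and so glues to the global statement.

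I expect the injectivity step to be the main obstacle. The point is that flatness (via Lemma \ref{flat1}) controls the $\mathfrak{a}$-adic filtration of $A$, whereas the object we want to understand, $B_{f}$, is filtered by the ``purely fermionic'' filtration by products of odd elements; these differ because the $\Z_{2}$-grading is not respected by the $A$-module structure, and reconciling them --- together with checking that the relevant natural maps compose to the claimed isomorphisms on graded pieces --- is the only non-formal part. Reduction to the affine case, surjectivity, and the reassembly into a fibre product are all immediate from Lemmas \ref{bpi} and \ref{flat1}.
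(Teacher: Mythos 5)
Your proof is correct and starts from the same two ingredients as the paper --- Lemma \ref{bpi} to get $\mathfrak{a}B=\mathfrak{b}$, and Lemma \ref{flat1} to exploit flatness --- but the execution is genuinely more elaborate. The paper applies Lemma \ref{flat1} exactly once, to the single closed immersion $\tau_b(X)\hookrightarrow X$ with ideal $\mathcal{I}_X=\mathfrak{A}\cdot\mathfrak{A}_f$, concludes $\mathcal{I}_Y=g^*\mathcal{I}_X$, and then simply ``takes the fermionic portion'' to read off $\mathfrak{B}_f=g^*\mathfrak{A}_f$; the remaining claims are declared to follow easily. You, by contrast, isolate the injectivity of $A_f\otimes_{A_b}B_b\to B_f$ as the real content and prove it by applying Lemma \ref{flat1} to the whole tower of infinitesimal neighbourhoods $\mathrm{Spec}(A/\mathfrak{a}^k)$, comparing associated graded pieces of the $\mathfrak{a}$-adic and $\mathfrak{b}$-adic filtrations, and using nilpotence of the odd ideal (which does hold under the paper's standing locally-of-finite-type hypothesis) to pass from the graded isomorphisms to the ungraded one. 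What your approach buys is an explicit justification of the step the paper leaves implicit --- identifying the odd part of $\mathcal{I}_X\otimes_{\mathfrak{A}}\mathfrak{B}$ with $\mathfrak{A}_f\otimes_{\cO_{X_b}}\cO_{Y_b}$ is not a purely formal matter of ``taking components,'' and your graded argument is one legitimate way to supply it. What it costs is length and one compressed step: the sentence beginning ``one checks that the surjection \dots is filtered and is an isomorphism on every associated graded piece'' is exactly where the work lives (one must verify that the induced filtration on $A_f\otimes_{A_b}B_b$ has graded pieces mapping isomorphically, which follows because the composite $(\mathfrak{a}^k)_f/(\mathfrak{a}^{k+1})_f\otimes_{A_b}B_b\to(\mathfrak{b}^k)_f/(\mathfrak{b}^{k+1})_f$ is the restriction to odd parts of the isomorphism supplied by flatness), and this deserves to be written out. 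Two small imprecisions worth fixing: the $\C$-spans $J_k$ of $k$-fold products of odd elements do not themselves form a decreasing filtration (e.g.\ $\psi_1\psi_2\notin A_b\cdot A_f$ in $\C[\psi_1,\psi_2]$); the correct decreasing filtration is $\mathfrak{a}^k=\sum_{j\ge k}J_j$, whose graded pieces are nonetheless represented by $J_k$ as you intend. And the final compatibility with multiplication is automatic --- the map $A\otimes_{A_b}B_b\to B$, $a\otimes b\mapsto\phi(a)b$, is a ring homomorphism by construction --- so the clause about $\mathfrak{B}_f^2=g^*\mathfrak{A}_f^2$ ``ensuring'' it is unnecessary; that identity is a separate conclusion, which your even-graded-piece argument does deliver.
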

\proof  As $g$ is bosonic, we know from Lemma \ref{bpi} that $\tau_b(Y)= Y\times_{X}\tau_b(X)$, so $\mathfrak{B}/\mathcal{I}_{Y}\cong \mathfrak{B}\otimes_{\mathfrak{A}}\mathfrak{A}/\mathcal{I}_{X}$.  Now, the natural morphism $\tau_b(X)\rightarrow X$ is a closed immersion corresponding to the sheaf of ideals $\mathcal{I}_{X}=\mathfrak{A}\cdot\mathfrak{A}_{f}$, and by Lemma \ref{flat1}, we know that $\mathcal{I}_{Y}=g^{*}\mathcal{I}_{X}$.  Then taking the fermionic portion gives $\mathfrak{B}_{f}=g^*\mathfrak{A}_{f}$.  The second and third statements follow easily from the first.     $\Box$  

These two lemmas allow us to prove the following proposition which will be crucial in our later study of the moduli stack of stable maps from a Riemann surface to a scheme.
\begin{propn}\label{flat3}
Suppose $X=(X_{b}, \mathfrak{A})$ is a scheme which is fermionically of finite presentation, $Y=(Y_{b},\mathfrak{B})$ a scheme, and the morphism $f:Y\rightarrow X$ a flat bosonic morphism.  Then the underlying morphism of bosonic schemes $f_{b}:Y_{b}\rightarrow X_{b}$ is flat.
\end{propn}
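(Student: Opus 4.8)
The plan is to reduce the statement to a purely ring-theoretic assertion and then, using Lemma \ref{flat2}, to verify flatness of $f_b$ ``one graded piece at a time''.

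Since flatness of $f_b\colon Y_b\to X_b$ is checked on stalks and, by Lemma \ref{ideals}, the topological spaces underlying $X$, $Y$, $X_b$, $Y_b$ all coincide, it suffices to argue Zariski-locally: take $X=\mathrm{Spec}(A)$, $Y=\mathrm{Spec}(B)$ with $A=A_b\oplus A_f$, $B=B_b\oplus B_f$, and $f$ corresponding to a flat bosonic morphism of super-rings $A\to B$; we must show that $A_b\to B_b$ is flat. The crucial input is Lemma \ref{flat2}: because $f$ is bosonic and flat, $Y\cong X\times_{X_b}Y_b$, that is $B\cong A\otimes_{A_b}B_b$ as super-algebras over $A$, the bosonic summand being $A_b\otimes_{A_b}B_b=B_b$ and the fermionic summand $A_f\otimes_{A_b}B_b=B_f$; and by hypothesis $B$ is flat over $A$.

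Now fix a monomorphism $\varphi\colon M\hookrightarrow N$ of $A_b$-modules; the goal is to see that $\varphi\otimes_{A_b}\mathrm{id}_{B_b}$ is injective. Extend scalars along $A_b\hookrightarrow A$: since $A=A_b\oplus A_f$ is the direct sum of its even (bosonic) and odd (fermionic) parts, the induced map splits as $\varphi\oplus(\varphi\otimes\mathrm{id}_{A_f})$, so its kernel $\mathcal{K}$ is contained in the odd summand $M\otimes_{A_b}A_f$ — this is the single place where injectivity of $\varphi$ is used, and it forces $\mathcal{K}$ to be concentrated in odd degree (with $A_f$ necessarily acting on it as zero). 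Tensoring the exact sequence $0\to\mathcal{K}\to M\otimes_{A_b}A\to N\otimes_{A_b}A$ over $A$ with the $A$-flat algebra $B$, and using $(M\otimes_{A_b}A)\otimes_A B\cong M\otimes_{A_b}B$ (likewise for $N$), identifies $\ker(\varphi\otimes_{A_b}\mathrm{id}_B)$ with $\mathcal{K}\otimes_A B$. But $\mathcal{K}\otimes_A B\cong\mathcal{K}\otimes_{A_b}B_b$ is purely odd (odd tensored with even), hence lies inside the fermionic summand $M\otimes_{A_b}B_f$ of $M\otimes_{A_b}B=(M\otimes_{A_b}B_b)\oplus(M\otimes_{A_b}B_f)$; since $\varphi\otimes_{A_b}\mathrm{id}_B$ respects this decomposition, its kernel splits accordingly, and comparing the two descriptions forces the bosonic component $\ker(\varphi\otimes_{A_b}\mathrm{id}_{B_b})$ to vanish. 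As $\varphi$ was arbitrary, $B_b$ is flat over $A_b$, which is the claim.

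The step to be careful about — and the main conceptual obstacle — is that one cannot simply say ``$f$ is the base change of $f_b$ along $X\to X_b$, so flatness descends'': the morphism $X\to X_b$ is flat precisely when $\mathfrak{A}_f$ is flat over $\mathcal{O}_{X_b}$, which is not assumed, so faithfully flat descent is unavailable and the graded bookkeeping above is genuinely needed. An alternative to the last paragraph is to combine (a) flatness of $\tau_b(f)\colon\tau_b(Y)\to\tau_b(X)$, which follows from Lemma \ref{bpi} together with the $A$-flatness of $B$, with (b) the vanishing of $\mathrm{Tor}_1^{A_b}(A_b/\mathfrak{A}_f^2,B_b)$, which follows similarly, and then invoke the infinitesimal criterion of flatness; in that route the hypothesis that $X$ is fermionically of finite presentation enters through the fact that the ideal $\mathfrak{A}_f^2\subseteq\mathcal{O}_{X_b}$ is then nilpotent, so that the criterion applies.
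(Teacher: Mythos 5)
Your proof is correct, and it takes a genuinely different route from the paper's. The paper also reduces to the affine case and invokes Lemma \ref{flat2}, but then applies the local criterion for flatness to the $A_b$-algebra $B_b$ relative to the ideal $I_b=A_f^2$: this is exactly where the hypothesis that $X$ is fermionically of finite presentation enters (it makes $A_f^2$ nilpotent so the criterion is applicable), and the remaining content is the vanishing of $\mathrm{Tor}_1^{A_b}(B_b,A_b/I_b)$, established by comparing $B_b\otimes_{A_b}I_b\to B_b\otimes_{A_b}A_b$ with the injective map $B\otimes_A I\to B\otimes_A A$ --- essentially the ``alternative'' you sketch in your final paragraph. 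Your main argument instead verifies flatness of $A_b\to B_b$ directly on an arbitrary monomorphism $\varphi\colon M\hookrightarrow N$: extend scalars to $A$, note the kernel $\mathcal{K}$ is purely odd and killed by $A_f$, tensor with the $A$-flat $B$, and use $B\cong A\otimes_{A_b}B_b$ to identify $\ker(\varphi\otimes\mathrm{id}_B)$ with $\mathcal{K}\otimes_{A_b}B_b$, which remains purely odd, so the even component $\ker(\varphi\otimes\mathrm{id}_{B_b})$ must vanish. The one step worth spelling out is why $\mathcal{K}\otimes_AB$ has no even part: a priori it contains classes $k\otimes b_f$, but these die because $B_f=A_f\cdot B_b$ and $A_f\cdot\mathcal{K}=0$ --- precisely the parenthetical you flagged. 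What your route buys is that it bypasses the local criterion entirely and, notably, never uses the fermionically-of-finite-presentation hypothesis: granting Lemma \ref{flat2}, your argument proves the proposition without that assumption. What the paper's route buys is that it runs on a single standard Tor computation plus a textbook criterion, at the price of needing the nilpotence of $A_f^2$.
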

\proof Again without loss of generality we can assume that all schemes are affine, so $f$ corresponds to a morphism of rings $A\rightarrow B$.  Using the fact that $f$ is bosonic, it follows that $\tau_b(Y)\rightarrow\tau_b(X)$ is flat as the base-change of the flat morphism $f$.  Now consider a closed immersion $\tau_b(X)\rightarrow X_{b}$ given by the sheaf of ideals $A_{f}^{2}\subset A_{b}$, and likewise for $B$.  By Lemma \ref{flat2}, we know that $B^{2}_{f}=B_{b}\cdot A_{f}^{2}$.  Now, let $I\equiv A\cdot A_{f}$; by assumption $X$ is fermionically of finite presentation, so it follows that $A_{f}^{2}$ is a nilpotent ideal.  We are then in a situation to apply the local criterion for flatness of the $A_{b}$-algebra $B_{b}$, as stated in \cite{Altman:1970}.  As $B_{b}/(I_{b}\cdot B_{b})$ is flat as an $A_{b}/I_{b}$-algebra, it suffices to show that $\Tor^{A_{b}}_{1}(B_{b},A_{b}/I_{b})=0$.

Consider the short exact sequence
\begin{equation*}
0\rightarrow I_{b}\rightarrow A_{b}\rightarrow A_{b}/I_{b}\rightarrow 0
\end{equation*}
Applying the functor $\Tor^{A_{b}}_{*}(B_{b},-)$ to this yields a long exact sequence, of which we are interested in the following portion:
\begin{equation*}
\xymatrix{
\Tor^{A_{b}}_{1}(B_{b},A_{b}) \ar[r] & \Tor^{A_{b}}_{1}(B_{b},A_{b}/I_{b}) \ar[r] & B_{b}\otimes_{A_{b}}I_{b} \ar[r] \ar[d]^{\alpha} & B_{b}\otimes_{A_{b}}A_{b} \ar[d]^{\gamma} \\
& & B\otimes_{A}I \ar[r]^{\beta} & B\otimes_{A}A }
\end{equation*}
Now, as $A_b$ is a free $A_b$-module so $\Tor^{A_{b}}_{1}(B_{b},A_{b})=0$.  Hence, $\Tor^{A_{b}}_{1}(B_{b},A_{b}/I_{b})$ will vanish provided the map $B_{b}\otimes_{A_{b}}I_{b}\rightarrow B_{b}\otimes_{A_{b}}A_{b}$ is injective.  The map $\beta$ is injective as $B$ is a flat $A$-module. Since 
$$ B \otimes_A I = (B \otimes_{A_b} I)/(a_f b \otimes i - b \otimes a_f i : a_f \in A_f,\; b \in B,\; i \in I), $$ we see that $\alpha$ is injective as well. Consequently, $B_{b}\otimes_{A_{b}}I_{b}\rightarrow B_{b}\otimes_{A_{b}}A_{b}$ is injective and $\Tor^{A_{b}}_{1}(B_{b},A_{b}/I_{b})=0$ as desired.     $\Box$  

\begin{defn}[Smooth/\'etale morphism]
A morphism of schemes $f: Y \to X$ is called \emph{smooth} if it is flat and for every point $\mathrm{Spec}(\C) \to X$ the fibre $Y \times_X \mathrm{Spec}(\C)$ is a smooth super-scheme. It is called \emph{\'etale} if it is smooth, bosonic and has zero-dimensional fibres.
\end{defn}

\subsection{Super-stacks}

We now look to build upon the theory of schemes (much of which has built on prior research in super-geometry) and develop a theory of (super-)stacks.  For the reader unfamiliar with this nomenclature, smooth Deligne-Mumford stacks can be thought of as the algebraic generalization of orbifolds in the same way that smooth schemes can be viewed as the algebraic analogue of manifolds.  However, where a coarse moduli space only ``remembers'' a group action at singular points in the original manifold, a stack encodes much more information \cite{EGA:4}.  For the case we will be interested in, the stack will encode the automorphism group of curves in a target space which are the image of a world-sheet of a particular genus under a map of a given degree; an orbifold point in the corresponding coarse moduli space can be thought of as a point where this automorphism group is non-trivial.  A good introduction to stacks is given by \cite{Gomez:1999}. 

Stacks are a generalization of sheaves. The definition in the literature (e.g., \cite{stacks-project, Gomez:1999, Vistoli:2005}) applies directly to the context of supergeometry. Our treatment of algebraic super-stacks on the other hand, should be seen as a special case of relative algebraic geometry \cite{Hakim}. In this theory one replaces the category of rings by a more general category of monoid objects in suitable monoidal category. This approach to algebraic geometry is fundamental to the subject of derived algebraic geometry (see \cite{Toen:2001} for a discussion of this circle of ideas).

\subsubsection{\textit{Schemes as Representable Functors}}

One reason why bosonic schemes can be hard to grasp intuitively is due to the fact that a point of a scheme does not necessarily correspond to a spatial point in the classical sense.  For instance, consider the affine scheme $\A^{1}$ corresponding to the ring of polynomials $\C[t]$.  We know that points of $\A^{1}$ are given by prime ideals in $\C[t]$, so we have a point for every complex number $z\in\C$, namely the prime ideal $(t-z)$.  But we also have the zero ideal $(0)$, which does not correspond to a complex number and therefore to a point of the variety $\A^{1}$.  This is an example of what is called a \textit{generic point} \cite{Hartshorne:1977}.  Geometrically, generic points correspond to irreducible subvarieties; in our example of $\A^{1}$, there is only one non-trivial closed irreducible subvariety: $\A^{1}$ itself.

A way to single out those points of a bosonic $\C$-scheme $X$ which we believe to be points of some variety is by considering the set of morphisms $\{\mathrm{Spec}(\C)\rightarrow X\}$.  Geometrically, $\mathrm{Spec}(\C)$ is a point with the structure sheaf corresponding to constant $\C$-valued functions.  It is therefore not surprising that this set of morphisms, denoted $X(\mathrm{Spec}(\C))$, agrees with what we understand to be the ``set of points'' of a bosonic scheme $X$.  The same concept holds for a manifold $M$ in differential geometry: the set of morphisms $\{\mathrm{pt}\rightarrow M\}$ agrees with the set of points of $M$.

In general, we are free to consider the set of morphisms $Y\rightarrow X$ (for any bosonic scheme $Y$), which we denote by $X(Y)$.  This gives a functor, referred to as the ``functor of points of $X$.''\footnote{Actually, this is a co-functor from the category of bosonic schemes to the category of sets: $(\mathrm{B}\mathsf{Sch})^{\mathrm{op}}\rightarrow\mathsf{Set}$.}  It is a general principle (true in every category) that this functor determines the object uniquely (the Yoneda lemma).  Grothendieck's viewpoint on algebraic geometry is to study a bosonic scheme in terms of its functor of points \cite{EGA:4}.

For example, we can see that $\A^{1}(X)$ is the set of regular functions on $X$ (i.e., $\cO_{X}(X)$).  For $\P^{n}(X)$, a $X$-point of $\P^{n}$ corresponds to a line bundle $L$ together with $n+1$ global sections $\{s_{0},\ldots,s_{n}\}$ which span $L$.  Similar descriptions exist for Grassmanians and flag schemes.

The functorial viewpoint of super-geometry was introduced by the Bernstein school (c.f., \cite{Bern-96}) and emphasized by Manin \cite{Manin:1997}.  Our goal is now to extend these ideas, allowing us to study algebraic spaces and algebraic stacks.  We first recall some necessary notions from category theory:
\begin{defn}[Grothendieck Pretopology]\label{GP}
Let $\mathsf{C}$ be a category.  A \emph{Grothendieck pretopology} on $\mathsf{C}$ is a family of coverings (i.e., a distinguished collection of morphisms $\{U_{i}\rightarrow X\}_{i\in I}$) satisfying the following axioms.
\begin{enumerate}
\item Every isomorphism $Y\rightarrow X$ is a covering.

\item Given a covering $\{U_{i}\rightarrow X\}_{i\in I}$ and a morphism $Y\rightarrow X$, then $\{U_{i}\times_{X}Y\rightarrow Y\}_{i\in I}$ is a covering of $Y$, provided the fiber products exist in $\mathsf{C}$.

\item Given a covering $\{U_{i}\rightarrow X\}_{i\in I}$, and for every $i\in I$ a covering $\{V_{ij}\rightarrow U_{i}\}_{j\in I_{j}}$ of $U_{i}$, then $\{V_{ij}\rightarrow X\}_{(i,j)\in\coprod_{i\in I}I_{j}}$ is a covering of $X$.
\end{enumerate}
\end{defn}

Na\"{i}vely, one can take $\mathsf{C}=\mathrm{B}\mathsf{Sch}$ and consider the Grothendieck pretopology given by the actual open coverings; such a construction generalizes obviously to the category of schemes.  In practice this is not enough to give interesting theories of principal bundles or cohomology with constant coefficients due to the fact that Zariski-open subsets are too big.  This situation is remedied by studying more general ``open subsets,'' not necessarily given by the inclusion of Zariski open subsets.  In other words, we need a suitable notion of locality in the context of category theory which yields non-trivial cohomology.

To do this, we introduce the notion of a set-valued functor as sheaf, as well as the so-called ``fppf'' pretopology:
\begin{defn}\label{funcsheaf}
A set-valued functor $\mathcal{F}:(\mathsf{C})^{\mathrm{op}}\rightarrow\mathsf{Set}$ is a sheaf if, for every covering $\{U_{i}\rightarrow X\}_{i\in I}$ of a $X\in\ob(\mathsf{C})$ and every family of local sections $\{s_{i}\in\mathcal{F}(U_{i})\}_{i\in I}$, there exists a unique section $s\in\mathcal{F}(X)$ satisfying: $\mathcal{F}(U_{i}\rightarrow X)(s)=s_{i}$ for all $i\in I$ if and only if for all $(i,j)\in I\times I$ we have $\mathcal{F}(U_{i}\times_{X}U_{j}\rightarrow U_{i})(s_{i})=\mathcal{F}(U_{i}\times_{X}U_{j}\rightarrow U_{j})(s_{j})$.
\end{defn}
The latter condition in this definition is a gluing condition familiar from the theory of sheaves on topological spaces.  Most often, we will take $\mathsf{C}=\mathsf{Sch}$, the category of schemes.
\begin{defn}[fppf/\'{e}tale Pretopology]\label{fppf}
The \emph{fppf} (resp. \emph{\'{e}tale}) pretopology on the category of schemes consists of coverings $\{U_i\to X\}_{i \in I}$, where $U_i \to X$ is a flat (resp. \'{e}tale) morphism locally of finite presentation in the category of schemes and $$\coprod_{i \in I} U_i \to X$$ is surjective.
\end{defn}

One can check that Definition \ref{fppf} satisfies Definition \ref{GP} of a Grothendieck pretopology.  Furthermore, we see once again that the topological aspects of super-geometry are solely encoded in the underlying bosonic topology.  We now obtain the following lemma, which has important consequences for our development of stacks.
\begin{lemma}\label{funcp}
The functor of points of a scheme is a sheaf with respect to the fppf topology.
\end{lemma}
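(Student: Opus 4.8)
The plan is to reduce the sheaf axiom of Definition~\ref{funcsheaf} to two classical descent statements of bosonic algebraic geometry, one for each of the two pieces of data that constitute a morphism of schemes. By Definition~\ref{morph}, for a scheme $T=(T_{b},\mathfrak{B})$ an element of $X(T)$ (with $X=(X_{b},\mathfrak{A})$) is a pair $(f,\phi)$, where $f\colon T_{b}\to X_{b}$ is a morphism of bosonic schemes and $\phi\colon\mathfrak{A}\to f_{*}\mathfrak{B}$ a morphism of sheaves of super-algebras on $X_{b}$; since $\mathfrak{A}$ is quasi-coherent, $\phi$ is equivalently a morphism $\psi\colon f^{*}\mathfrak{A}\to\mathfrak{B}$ of quasi-coherent sheaves of super-algebras on $T_{b}$, and the normalisations $\mathfrak{A}_{b}=\cO_{X_{b}}$, $\mathfrak{B}_{b}=\cO_{T_{b}}$ force $\psi$ to agree with the structure morphism in bosonic degree. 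Thus $X(-)$ is the functor assigning to $T$ the set of such pairs $(f,\psi)$, and it lies over the functor $T\mapsto X_{b}(T_{b})$ through the first component.

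First I would fix an fppf covering $\{U_{i}\to T\}_{i\in I}$ of a scheme $T$ in the sense of Definition~\ref{fppf}, together with a family $g_{i}\in X(U_{i})$ agreeing after base change to each overlap $U_{i}\times_{T}U_{j}$, and produce the unique $g\in X(T)$ restricting to the $g_{i}$. Since the underlying topological space of a scheme is that of its bosonic part, and the covering morphisms are flat and locally of finite presentation, the induced family $\{(U_{i})_{b}\to T_{b}\}$ is an fppf covering of bosonic schemes --- precisely the observation, recorded after Definition~\ref{fppf}, that the topology of super-geometry is carried by the bosonic site. Writing $g_{i}=(f_{i},\psi_{i})$, the bosonic morphisms $f_{i}\colon(U_{i})_{b}\to X_{b}$ are then compatible on overlaps, so by the classical fact that the functor of points of a bosonic scheme is an fppf sheaf \cite{EGA:4,Laumon:2000} they glue to a unique $f\colon T_{b}\to X_{b}$.

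It then remains to glue the algebra data $\psi_{i}\colon f_{i}^{*}\mathfrak{A}\to\mathfrak{B}_{U_{i}}$. Rewriting these via $f_{i}=f\circ\bigl((U_{i})_{b}\to T_{b}\bigr)$, they form a descent datum for a morphism of quasi-coherent sheaves along the fppf covering $\{(U_{i})_{b}\to T_{b}\}$ of $T_{b}$, and by fppf descent for quasi-coherent sheaves \cite{stacks-project} --- which descends the sheaves, the morphisms between them, and hence the algebra structure, since associativity and unitality are checked fppf-locally --- this glues to a unique $\psi\colon f^{*}\mathfrak{A}\to\mathfrak{B}$ on $T_{b}$, again restricting to the identity in bosonic degree. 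The pair $g=(f,\psi)$ is the required section, and its uniqueness follows from the uniqueness in each of the two gluing steps, which establishes the sheaf condition.

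The step I expect to demand the most care is the bookkeeping of fibre products and cocycle conditions when passing from the covering in $\mathsf{Sch}$ to its bosonic shadow. The truncation $\tau_{b}$ commutes with fibre products by Lemma~\ref{fp}, but the forgetful functor $(-)_{b}$ to bosonic schemes does \emph{not}: in general there is only a canonical morphism $(U_{i}\times_{T}U_{j})_{b}\to(U_{i})_{b}\times_{T_{b}}(U_{j})_{b}$, and it need not be an isomorphism, since products of odd sections produce new even sections on the source. One must therefore check directly that compatibility of the $g_{i}$ on $U_{i}\times_{T}U_{j}$ still exhibits the $f_{i}$ and the $\psi_{i}$ as honest descent data on $(U_{i})_{b}\times_{T_{b}}(U_{j})_{b}$; here the flatness and $\Tor$-vanishing techniques of Lemma~\ref{flat1} and Proposition~\ref{flat3}, applied to the two projections of the covering, are what make the cocycle conditions on the super and bosonic sites match up. Once this compatibility is in place, the rest of the argument is just the two classical descent statements recalled above.
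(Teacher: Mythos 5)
Your overall strategy --- split a $T$-point of $X$ into its underlying bosonic morphism plus a morphism of sheaves of super-algebras, and glue the two pieces by classical fppf descent --- is natural, but it has a genuine gap, and it is essentially the one you flag in your last paragraph without actually closing. The covering morphisms $U_i\to T$ of Definition \ref{fppf} are only required to be flat and locally of finite presentation \emph{as morphisms of super-schemes}; they are not assumed bosonic (for a split smooth $X$, the map $X\to\tau_b(X)$ is already an fppf cover that is far from bosonic). Consequently Proposition \ref{flat3}, whose hypotheses include bosonicity, does not apply to the $U_i\to T$ or to the projections $U_i\times_T U_j\to U_i$, and your claim that $\{(U_i)_b\to T_b\}$ is an fppf covering of bosonic schemes is false in general: for $A=\C[x,\psi]/(x\psi)$ and $B=A\otimes_{\C}\C[\chi]$ the map $A\to B$ is faithfully flat and of finite presentation, yet $B_b\cong\C[x,\epsilon]/(\epsilon^2,x\epsilon)$ is not flat over $A_b=\C[x]$. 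The second obstruction is the one you name: the canonical map $(U_i\times_T U_j)_b\to (U_i)_b\times_{T_b}(U_j)_b$ corresponds to a ring map $B_b\otimes_{A_b}C_b\to(B\otimes_A C)_b$ that need not be injective (relations such as $(a_fb_f)\otimes c_b=\pm\,b_f\otimes(a_fc_b)$ identify even elements across the two summands), so agreement of the $g_i$ after pullback to $(U_i\times_T U_j)_b$ does not yield the descent datum on $(U_i)_b\times_{T_b}(U_j)_b$ that bosonic descent requires; morphisms out of a bosonic scheme are not determined by their restriction along a nilpotent closed immersion. Appealing to ``the flatness and $\Tor$-vanishing techniques of Lemma \ref{flat1} and Proposition \ref{flat3}'' does not repair either point, because both results presuppose bosonic morphisms.

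The paper avoids all of this by not reducing to the bosonic site at all: its (admittedly one-line) proof is that Grothendieck's faithfully flat descent argument applies verbatim to super-rings. Concretely, for a faithfully flat morphism of super-rings $C\to C'$ the Amitsur sequence $C\to C'\rightrightarrows C'\otimes_C C'$ is exact by the classical proof (flatness for super-modules is defined by exactness of $-\otimes_C C'$, and the sequence splits after a faithfully flat base change), whence $\Hom(B,C)\to\Hom(B,C')\rightrightarrows\Hom(B,C'\otimes_C C')$ is an equalizer for every super-ring $B$; globalizing gives the sheaf property of Definition \ref{funcsheaf}. If you wish to retain your two-step decomposition you must either restrict to bosonic coverings or first establish super-analogues of your two descent inputs directly, at which point the direct argument is both shorter and strictly more general.
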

\proof This result follows from definitions \ref{funcsheaf}, \ref{fppf}, and descent theory (c.f., \cite{Vistoli:2005}). $\Box$

\subsubsection*{\textit{Functors as Fibered Categories}}

One way of viewing stacks classically is as categories which are fibered in groupoids, and this is precisely the perspective we want to extend to the setting of super-geometry.  A groupoid is a group with several identities, or equivalently, as a set with automorphism groups attached to every element.  This will be the ideal way to describe orbifolds and stacks in algebraic super-geometry.
\begin{defn}[Groupoid]\label{groupoid}
A \emph{groupoid} is a small category, where all morphisms are isomorphisms.
\end{defn}

A groupoid-valued lax 2-functor from $\mathcal{F}:\mathsf{C}\rightarrow\mathsf{Grpd}$ is given by assigning to every object $X$ a groupoid $\mathcal{F}(X)$, and to every morphism $X\rightarrow Y$ a morphism of groupoids $\mathcal{F}(X)\rightarrow\mathcal{F}(Y)$.  This differs from an ordinary functor because the identity
\begin{equation*}
\mathcal{F}(g \circ f) =  \mathcal{F}(g) \circ \mathcal{F}(f)
\end{equation*}
is \emph{not} precisely satisfied; additionally, one requires a variety of other consistency conditions be satisfied.  Rather than following this track of 2-functors (which is somewhat tedious even in the classical sense), we pursue the slightly more conceptual language of fibered categories.  This allows us to avoid undue techinicalities in our discussion of super-geometry without sacrificing rigor.

\begin{defn}[Category Fibered in Groupoids]\label{fibered}
Let $\mathcal{F}:\mathsf{C}\rightarrow\mathsf{D}$ be a functor.  Let $y\in\ob(\mathsf{C})$ and $u\rightarrow v$ a morphism in $\mathsf{D}$.  If $v=\mathcal{F}(y)$, we say that a morphism $x\rightarrow y$ completes this data to a cartesian diagram if $\mathcal{F}(x)=u$ and for every $z\in\ob(\mathsf{C})$ with a morphism to $y$ and $\mathcal{F}(z)\rightarrow u$ making the diagram:
\[
\xymatrix{ \mathcal{F}(z) \ar[r] \ar[d] & \mathcal{F}(y) \ar@{=}[d] \\
u \ar[r] & v} 
\]
commute, there exists a unique morphism $z \to x$ such that
\begin{equation*}
\xymatrix{
\mathcal{F}(z) \ar@/_/[ddr] \ar@/^/[drr]
\ar@{-->}[dr] \\
& \mathcal{F}(x) \ar[d] \ar[r]
& \mathcal{F}(y) \ar@{=}[d] \\
& u \ar[r] & v }
\end{equation*}
commutes.  We say that $\mathcal{F}:\mathsf{C}\rightarrow\mathsf{D}$ is a \emph{category fibered in groupoids} (henceforth, a \emph{CFG}) if every diagram can be completed to a cartesian diagram.
\end{defn}

For every $u\in\ob(\mathsf{D})$, we consider the subcategory of $\mathsf{C}$ given by $x \in \ob(\mathsf{C})$ satisfying $\mathcal{F}(x) = u$ and morphisms $x \to y$ satisfying $\mathcal{F}(x \to y) = \mathrm{id}_u$.  This subcategory is called the fiber over $u$ of $\mathsf{C}$.  Definition \ref{fibered} states that a category fibered in groupoids is precisely a functor with all fibers being groupoids. In the language of $2$-functors (which we have neglected here), this gives rise to a lax 2-functor $(\mathsf{D})^{\mathrm{op}}\rightarrow\mathsf{Grpd}$ sending $u$ to the groupoid $\mathcal{F}^{-1}(u)$. The functoriality stems from the fact that we can pullback an object $y$ in $\mathsf{C}$ lying over $v$ along a morphism $\pi: u \to v$, to obtain $\pi^*y$. This $2$-functorial point of view suggests that there should exist an analogue of sheaves in line with Definition \ref{funcsheaf}.

The functor which we will need to satisfy a sheaf property in this context is the ``functor of isomorphisms,'' but we also demand a descent property for the objects in the CFG. Let $\mathcal{F}:\mathsf{C}\rightarrow\mathsf{D}$ be a CFG and $u \in \ob(\mathsf{D})$ an object. Given two objects $x$ and $y$ of $\mathsf{C}$ lying over $u$, we can define a set-valued functor $$\mathrm{Iso}(x,y): (\mathsf{D}/u)^{\mathrm{op}} \to \mathsf{Set}.$$ It sends an object of the comma category $\mathsf{C}/u$, which is simply a morphism $\phi: v \to u$, to the set of isomorphisms $$\phi^*x \to \phi^*y.$$ Note that this functor is well-defined up to equivalence, as the same is true for the pullbacks $\phi^*u$ and $\phi^*v$. We refer to subsection I.3.7 in \cite{Vistoli:2005} for a concise discussion of this concept.

\begin{defn}[Stack]\label{stacks}
Let $\mathcal{F}:\mathsf{C}\rightarrow\mathsf{D}$ be a CFG; and assume that $\mathsf{D}$ is endowed with a Grothendieck pretopology. We say that $\mathcal{F}$ is a \emph{stack}, if the following two conditions are satisfied:
\begin{itemize}
\item Given two objects $x$ and $y$ of $\mathsf{C}$, lying over the same object $u$ of $\mathsf{D}$; the functor of isomorphisms between $x$ and $y$ $$\mathrm{Iso}(x,y): (\mathsf{C}/u)^{\mathrm{op}} \to \mathsf{Set}$$ is a sheaf.

\item Let $\{\iota_i: u_{i}\rightarrow u\}_{i\in I}$ be a covering in $\mathsf{D}$, and $p_{1}:u_{i}\times_{i}u_{j}\rightarrow u_i$, $p_{12}:u_i \times_u u_j \times_u u_k\rightarrow u_{i}\times_{u}u_{j}$, etc. be the natural projections.  Then given objects $x_i$ of $\mathsf{C}$ lying over $u_i$, together with isomorphisms $$\phi_{ij}: p_{1}^*x_i \to p_2^*x_j$$ on $u_i \times_u u_j$ satisfying the cocycle condition $$p_{23}^*\phi_{jk} \circ p_{12}^*\phi_{ij} = p_{13}^*\phi_{ik}$$ on $u_i \times_u u_j \times_j u_k$, there exists an object $x$ over $u$ together with isomorphisms $$\phi_i: \iota_i^*x \to x_i$$ on $u_i$, satisfying $$\phi_{ij} \circ \phi_i = \phi_{j}$$ on $u_i \times_u u_j$.
\end{itemize}
\end{defn}

\subsubsection{\textit{Super-stacks}}

We are now ready to set out our superized notion of stacks.  To do this, we choose a Grothendieck pretopology on the category of schemes, $\mathsf{Sch}$, resulting in the following definition:
\begin{defn}[Super-stack]\label{superstack}
A \emph{super-stack} (henceforth, a \emph{stack}) is a CFG over the category of schemes which is a stack with respect to the \emph{fppf} pretopology.
\end{defn}

Just as $\tau_b$ was extended to a functor for schemes, we can extend it to a functor of stacks as well.  The motivation for such an extension is derived from the universal property of $\tau_b$ given by Lemma \ref{univ}, and how it acts on the space of maps between schemes.  We have also seen that the functor of points of the bosonic scheme $\tau_b(X)$ can be described as the functor of points of the scheme $X$ restricted to bosonic schemes.  Such a definition is sensible because any Grothendieck pretopology on $\mathsf{Sch}$ restricts (by Definition \ref{fppf}) to the usual pretopology on $\mathrm{B}\mathsf{Sch}$.

\begin{defn}\label{bred}
For a stack $\mathcal{X}$, we define $\tau_b(\mathcal{X})$ to be the bosonic stack given by restriction to the full subcategory of bosonic schemes.
\end{defn}
As before, $\tau_b$ is the right adjoint to the fully faithful inclusion functor between 2-categories $\iota: \mathrm{B}\mathsf{Stack}\rightarrow\mathsf{Stack}$.  Hence, to view a bosonic stack $\mathcal{X}_{b}$ as a stack over $\mathsf{Sch}$, we take
\begin{equation*}
\iota(\mathcal{X}_{b})(U)\equiv\mathcal{X}_{b}(U_{b}).
\end{equation*}

\subsubsection{\textit{Morphisms of Stacks and Algebraicity}}

Surjectivity is a property which will be important in the study of morphisms between stacks.  At the level of schemes, the condition that $Y\rightarrow X$ is surjective is just the condition that the morphism is surjective upon restriction to the underlying bosonic topological spaces (i.e., $\tau_b(Y)\rightarrow\tau_b(X)$ is surjective).  To define a notion of surjectivity for morphisms of stacks, we use the following procedure, well-known in the bosonic case \cite{Laumon:2000}, using the stack's set of points.

Let $\mathcal{X}$ be some stack over affine schemes and $F,L,K$ some fields, and consider the set of isomorphism classes $\mathcal{X}(\mathrm{Spec}(F,L,K))$.  We then identify a point $x\in\ob(\mathcal{X}(\mathrm{Spec}(F)))$ with $y\in\ob(\mathcal{X}(\mathrm{Spec}(L)))$ if there exists a common sub-field $K\subset F,L$ and a point $z\in\ob(\mathcal{X}(\mathrm{Spec}(K)))$ such that $x$ and $y$ both map to $z$.  The set of all such points in $\mathcal{X}\rightarrow\mathsf{Sch}$ up to this identification is called the set of points of the stack $\mathcal{X}$. This is a well-known concept from the theory of bosonic stacks, and only depends on $\tau_b(\mathcal{X})$ \cite{Laumon:2000}. Every morphism of stacks induces a map between their sets of points, and consequently we can call a morphism of stacks surjective if and only if the corresponding map between sets of points is surjective.  In particular, this means that the morphism $\mathcal{Y}\rightarrow\mathcal{X}$ is surjective if and only if the morphism of bosonic stacks $\tau_b(\mathcal{Y})\rightarrow\tau_b(\mathcal{X})$ is surjective. 

\begin{defn}[Schematic morphism]\label{schematic}
A morphism of algebraic stacks $f:\mathcal{Y}\rightarrow\mathcal{X}$ is said to be \emph{schematic} if, for all $X\rightarrow\mathcal{X}$ (where $X$ is a scheme), the diagram
\begin{equation*}
\xymatrix{
Y \ar[d] \ar[r] & X \ar[d] \\
\mathcal{Y} \ar[r]^{f} & \mathcal{X}
}
\end{equation*}
is cartesian for some scheme $Y$. We say that a schematic morphism is smooth, \'etale, flat or bosonic, if every base change $Y \to X$ has the respective property.
\end{defn}

The notion of schematic morphisms allows us to define algebraic super-spaces. This concept has been introduced by \cite{DominguezPerez:1997rh}.

\begin{defn}[Algebraic Space]\label{algebra}
A set-valued sheaf $\mathcal{X}$ on the category of schemes is called an \emph{algebraic super-space} (henceforth, an \emph{algebraic space}), if there exists a scheme $S$ together with a morphism $S \to \mathcal{X}$, which is a surjective morphism of stacks, which is schematic and \'etale.
\end{defn}

In the same way that an ordinary functor can be represented by a scheme, we can define what it means for a morphism of algebraic stacks to be representable:
\begin{defn}[Representable morphism]\label{rep}
A morphism of stacks $f:\mathcal{Y}\rightarrow\mathcal{X}$ is said to be \emph{representable} if, for all $X\rightarrow\mathcal{X}$ (where $X$ is a scheme), the diagram
\begin{equation*}
\xymatrix{
Y \ar[d] \ar[r] & X \ar[d] \\
\mathcal{Y} \ar[r]^{f} & \mathcal{X}
}
\end{equation*}
is cartesian for some algebraic space $Y$.
\end{defn}
If a morphism of stacks is representable, then we are free to define what is meant by other local morphism properties for it.  In this manner, any property of schemes which is invariant under base change can be defined as a property of morphisms of stacks, provided that morphism is representable; examples are \emph{\'{e}tale, smooth, un-ramified}, and so on.  The following definition sets out formal terminology which we use to deal with morphisms of stacks from now on:

\begin{defn}\label{propmorphs}
A morphism of stacks $\mathcal{Y}\rightarrow\mathcal{X}$ is called:
\begin{enumerate}

\item \emph{bosonic} if every base change of  $\mathcal{Y}\rightarrow\mathcal{X}$ in definition \ref{rep} is bosonic,

\item \emph{smooth} if every base change of  $\mathcal{Y}\rightarrow\mathcal{X}$ in definition \ref{rep} is smooth,

\item \emph{\'{e}tale} if every base change of $\mathcal{Y}\rightarrow\mathcal{X}$ in definition \ref{rep} is \'{e}tale.
\end{enumerate}
\end{defn}

With this definition at hand we can finally introduce algebraic stacks:

\begin{defn}[Algebraic Stack]\label{algss}
A stack $\mathcal{X}$ is called \emph{algebraic} if there exists an algebraic space $S$ together with a surjective, representable and smooth morphism $S \to \mathcal{X}$.
\end{defn}

We will say that the morphism $S \to \mathcal{X}$ is a \emph{complete versal family} for the algebraic stack $\mathcal{X}$\footnote{Often this morphism is referred to as an \emph{atlas} for the stack $\mathcal{X}$}. This terminology is taken from \cite{Behrend:2011} and emphasizes that in general stacks should be viewed as abstract moduli problems. Morally speaking, moduli problems are about classifying certain objects up to isomorphism. A complete versal family is a continuously (or regularly) varying family of such objects, such that every isomorphism type is covered (this is the surjectivity or completeness condition); versality (i.e., smoothness) is a weakened universality condition, which ensures that every other family can be obtained \'etale locally from our chosen one.

Now that we have a fully-developed vocabulary for describing morphisms between stacks in super-geometry, we are also ready to define a Deligne-Mumford stack:

\begin{defn}[Deligne-Mumford stack]\label{DM}
A stack $\mathcal{X}\rightarrow\mathsf{Sch}$ is \emph{Deligne-Mumford} if there exists a complete versal family $S$ with the corresponding morphism $S \to \mathcal{X}$ being \'{e}tale.
\end{defn}

We obtain the following fact directly from our definitions:
\begin{lemma}\label{ADM}
Let $\mathcal{Y}\rightarrow\mathcal{X}$ be a representable morphism of stacks.  If $\mathcal{X}$ is an algebraic stack (resp. Deligne-Mumford stack) then so is $\mathcal{Y}$.
\end{lemma}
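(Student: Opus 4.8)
The plan is to manufacture a complete versal family for $\mathcal{Y}$ by pulling back the one given for $\mathcal{X}$ along the representable morphism $f:\mathcal{Y}\to\mathcal{X}$, and to check that the relevant properties survive this base change. I would first collect three routine stability facts, all read off from the definitions of the preceding subsection: (i) the base change of a representable morphism of stacks along an arbitrary morphism of stacks is again representable, and a composition of representable morphisms is representable; (ii) the base change of a surjective morphism of stacks is surjective --- this is immediate since surjectivity was defined via the induced map on sets of points, which depends only on the bosonic truncations, where the assertion is classical; (iii) smoothness and \'etaleness of a representable morphism (Definition \ref{propmorphs}) are stable under base change, because those notions are defined by demanding that \emph{every} scheme-valued base change have the corresponding property, and smoothness/\'etaleness of a morphism of super-schemes is itself stable under base change (the flatness input being supplied by Lemmas \ref{flat1}, \ref{flat2} and Proposition \ref{flat3}).

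Next I would reduce to the case where the atlas of $\mathcal{X}$ is an honest scheme. If $\mathcal{X}$ is algebraic, pick an algebraic space $S$ with a surjective, representable, smooth morphism $S\to\mathcal{X}$; by Definition \ref{algebra} there is a scheme $S_{0}$ and a surjective, schematic, \'etale morphism $S_{0}\to S$. A schematic morphism is in particular representable (a scheme is an algebraic space), so the composite $S_{0}\to\mathcal{X}$ is surjective and representable, and it is smooth since \'etale-then-smooth reduces to the scheme-level statement. Thus we may assume the complete versal family $S\to\mathcal{X}$ has $S$ a scheme; in the Deligne--Mumford case $S\to\mathcal{X}$ is moreover \'etale, and the same composition keeps it \'etale.

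Now set $T:=S\times_{\mathcal{X}}\mathcal{Y}$. Since $f$ is representable and $S$ is a scheme, Definition \ref{rep} guarantees that $T$ is an algebraic space. The projection $T\to\mathcal{Y}$ is the base change of $S\to\mathcal{X}$ along $f$, so by (i)--(iii) it is representable, surjective, and smooth (resp.\ \'etale). Hence $T\to\mathcal{Y}$ is a complete versal family for $\mathcal{Y}$ that is smooth (resp.\ \'etale), and $\mathcal{Y}$ is an algebraic stack by Definition \ref{algss} (resp.\ a Deligne--Mumford stack by Definition \ref{DM}). The one point carrying actual content, rather than diagram-chasing, is fact (iii) --- base-change stability of ``smooth'' and ``\'etale'' for representable morphisms of super-stacks --- but this has effectively been arranged in advance, since those notions were set up scheme-locally and the needed stability for morphisms of super-schemes is exactly what the flatness lemmas above provide; so the argument should be short.
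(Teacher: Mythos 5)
Your proposal is correct and follows essentially the same route as the paper: both form the base change $S\times_{\mathcal{X}}\mathcal{Y}\to\mathcal{Y}$ of the complete versal family along the representable morphism and observe that surjectivity, smoothness, and \'etaleness survive. You simply spell out the base-change stability facts and the reduction to a scheme atlas that the paper's one-line argument leaves implicit.
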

\proof  If $\mathcal{X}$ is algebraic, then it contains a complete versal family $S \to \mathcal{X}$ for some scheme $S$.  By the definition of a representable morphism of stacks, we know that the base change $\mathcal{Y} \times_{\mathcal{X}} S \to \mathcal{Y}$ is a complete versal family of $\mathcal{Y}$; further, this will be \'{e}tale if the complete versal family of $\mathcal{X}$ was \'{e}tale.     $\Box$


\section{The Moduli Stack of Stable Maps}
\label{Moduli}

Having developed our understanding of algebraic super-geometry to the extent that we have a suitable notion of algebraic and Deligne-Mumford stacks, we now proceed to construct the object which will be of interest for twistor-string theory.  In what follows, we will consider $X$ to be a (complex) smooth projective scheme; this will be the target space of our string theory.  After providing the definitions necessary to construct the moduli stack of interest, we prove that it is a Deligne-Mumford stack.

\subsection{Moduli of Stable Maps}

Our first step must be to find a supersymmetric generalization of Kontsevich's concept of a stable map from a Riemann surface into a variety or bosonic scheme \cite{Kontsevich:1995}.  This proves to be a rather trivial generalization of the usual definition:

\begin{defn}[Stable Map]\label{Stablemap}
A \emph{stable map} over $T\in\ob(\mathsf{Sch})$ into $X$ is given by the following set of data:
\begin{equation*}
\left\{ \pi: \mathcal{C}\rightarrow T, \;  g, \; n,\; \beta\in H_{2}(X,\Z),\; \phi:\mathcal{C}\rightarrow X\right\},
\end{equation*}
where $\mathcal{C}$ is an algebraic space, $\pi$ is a proper, flat, bosonic morphism whose geometric fibers $\mathcal{C}_{t}$ are reduced, connected, and one-dimensional bosonic schemes (i.e. possibly singular Riemann surfaces), and $n=\{x_{i}: T\rightarrow\mathcal{C}\}_{i=1,\ldots,n}$ are marked points which vary smoothly between the fibers of $\mathcal{C}$.  Furthermore, these fibers obey the following conditions:
\begin{enumerate}
\item $\dim H^{1}(\cO_{\mathcal{C}_{t}})=g$;

\item the only singularities of $\mathcal{C}_{t}$ are ordinary double points;

\item every contracted irreducible component of arithmetic genus $h$ of $\mathcal{C}_t$ contains at least $3 - 2h$ special (i.e. marked or singular) points on its normalization;

\item $\phi_*[C_t] = \beta$.
\end{enumerate}
\end{defn}
In this definition, the notion of homology on a projective scheme $X$ has to be understood as the homology of the analytic space associated to $\tau_b(X)$. Criterion (\emph{3}.) of this definition is just the requirement that the automorphism group of the map be finite; this is the well-known hallmark of stability for curves and maps.

For a morphism of schemes $u:S\rightarrow T$, it is possible to pullback a $T$-family of stable maps to an $S$-family of stable maps. This allows us to organize families of stable maps into a CFG over $\mathsf{Sch}$.

\begin{defn}\label{Modspace}
Let $\bar{\cM}(X,g,n,\beta)(T)$ be moduli stack of maps into $X$ over $T$, as specified in Definition \ref{Stablemap}. Letting the base scheme $T$ vary, we obtain a CFG over $\mathsf{Sch}$ which is the moduli stack of stable maps to $X$, denoted by $\bar{\cM}_{g,n}(X,\beta)$.
\end{defn}

The following is a tautology and is proved by applying Lemma \ref{univ} at the level of each family in the moduli stack.

\begin{lemma}\label{taubM}
The bosonic truncation of $\bar{\cM}_{g,n}(X,\beta)$ is canonically equivalent to the moduli stack $\bar{\cM}_{g,n}(\tau_{b}(X),\beta)$; i.e., $$\tau_b(\bar{\cM}_{g,n}(X,\beta)) \cong \bar{\cM}_{g,n}(\tau_b(X),\beta). $$
\end{lemma}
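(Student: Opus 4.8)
The plan is to unwind both sides of the claimed equivalence as categories fibered in groupoids over $\mathrm{B}\mathsf{Sch}$ and exhibit a canonical equivalence fiber by fiber, compatibly with pullback. By Definition \ref{bred}, $\tau_b(\bar{\cM}_{g,n}(X,\beta))$ is the restriction of $\bar{\cM}_{g,n}(X,\beta)$ to the full subcategory of bosonic schemes; so I would fix a bosonic scheme $T_b$ and compare the groupoid of $T_b$-families of stable maps into $X$ with the groupoid of $T_b$-families of stable maps into $\tau_b(X)$. A $T_b$-family into $X$ is the data $\{\pi:\mathcal{C}\to T_b,\ \phi:\mathcal{C}\to X\}$ (with the numerical and stability conditions), where $\mathcal{C}$ is an algebraic space and $\pi$ is proper, flat and \emph{bosonic} with reduced connected one-dimensional geometric fibers. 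The key point is that $\pi$ being bosonic forces $\mathcal{C}$ itself to be a bosonic algebraic space: indeed, base-changing along $T_b \to X$ trivial, or more directly, since the identity $T_b \to T_b$ is a morphism from a bosonic scheme, $\mathcal{C} = \mathcal{C}\times_{T_b} T_b$ is bosonic by Definition \ref{bosonic} (extended to algebraic spaces). Hence $\mathcal{C}$ is bosonic, and therefore the map $\phi:\mathcal{C}\to X$ factors uniquely through $\tau_b(X)$ by Lemma \ref{univ}.

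With that observation in hand the equivalence is essentially formal. First I would define a functor $\Phi$ from the fiber of $\tau_b(\bar{\cM}_{g,n}(X,\beta))$ over $T_b$ to the fiber of $\bar{\cM}_{g,n}(\tau_b(X),\beta)$ over $T_b$: send $\{\pi:\mathcal{C}\to T_b,\ \phi:\mathcal{C}\to X\}$ to $\{\pi:\mathcal{C}\to T_b,\ \bar\phi:\mathcal{C}\to\tau_b(X)\}$, where $\bar\phi$ is the unique factorization supplied by Lemma \ref{univ}; on morphisms $\Phi$ is the identity on the underlying isomorphism of families $\mathcal{C}\to\mathcal{C}'$ over $T_b$, and compatibility with $\bar\phi$ follows from the uniqueness clause in Lemma \ref{univ}. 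In the reverse direction, a $T_b$-family of stable maps into $\tau_b(X)$ composes with the closed immersion $\tau_b(X)\to X$ to give a $T_b$-family into $X$; call this $\Psi$. The numerical conditions (1)--(4) of Definition \ref{Stablemap} and the stability condition are unchanged by either operation, since they only involve $\pi$, the marked points, and the homology class $\phi_*[\mathcal{C}_t]$, and the latter is computed on $\tau_b$ anyway (as remarked after Definition \ref{Stablemap}, homology on $X$ \emph{is} homology on the analytic space of $\tau_b(X)$). That $\Psi\circ\Phi = \mathrm{id}$ follows from $\bar\phi$ composed with $\tau_b(X)\hookrightarrow X$ recovering $\phi$ (the factorization property in Lemma \ref{univ}), and $\Phi\circ\Psi = \mathrm{id}$ follows from the uniqueness clause applied to a map that already lands in $\tau_b(X)$. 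So $\Phi$ is an equivalence of groupoids for each $T_b$.

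Finally I would check that $\Phi$ is compatible with pullback along morphisms $u:S_b\to T_b$ of bosonic schemes, so that it assembles into an equivalence of categories fibered in groupoids over $\mathrm{B}\mathsf{Sch}$, hence of bosonic stacks. Pullback of a stable map is given by the fiber product $\mathcal{C}\times_{T_b}S_b$ together with the composed map to the target; compatibility then reduces to the statement that forming $\bar\phi$ and then pulling back agrees with pulling back and then forming the factorization, which is again immediate from the uniqueness in Lemma \ref{univ} (both are maps $\mathcal{C}\times_{T_b}S_b \to \tau_b(X)$ through which the pulled-back $\phi$ factors).

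I do not expect a serious obstacle here; the statement is, as the authors say, a tautology. The only point deserving care is the claim that $\mathcal{C}$ is automatically bosonic, which rests on the hypothesis that $\pi$ is a bosonic morphism and on extending Definition \ref{bosonic} and Lemma \ref{univ} from schemes to algebraic spaces — routine, since algebraic spaces are built from schemes via étale (in particular bosonic) covers, and both notions are étale-local on the source. Everything else is bookkeeping with the universal property of $\tau_b$.
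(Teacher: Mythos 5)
Your proof is correct and is exactly the argument the paper has in mind: the paper's entire proof is the remark that the lemma ``is a tautology and is proved by applying Lemma \ref{univ} at the level of each family,'' and your write-up simply fills in the details (the observation that bosonicity of $\pi$ over a bosonic base forces $\mathcal{C}$ to be bosonic, the unique factorization of $\phi$ through $\tau_b(X)$, and compatibility with pullback). Nothing further is needed.
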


\subsection{Stack Properties}

In Definition \ref{Modspace} we have introduced a CFG over the category of schemes which describes the moduli problem of stable maps; our next goal is to prove a result analogous to the classic theorem of Behrend and Manin in the bosonic setting \cite{Behrend:1995}, namely that this CFG is in fact an algebraic stack.  For this we require that the target scheme $X$ be globally split; that is, there exists a locally free sheaf $\mathcal{V}$ on $\tau_b(X)$, such that $X$ is equivalent to the relative spectrum of the sheaf of algebras $\bigwedge^{\bullet}\mathcal{V}$. 
\begin{thm}\label{Main}
Let $X$ be a smooth, projective, and split scheme. Then $\bar{\cM}_{g,n}(X,\beta)\rightarrow\mathsf{Sch}$ is a Deligne-Mumford stack.
\end{thm}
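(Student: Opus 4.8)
The plan is to bootstrap from the classical theorem of Behrend and Manin \cite{Behrend:1995}. By Lemma \ref{taubM} the bosonic truncation of our CFG is $\tau_b(\bar{\cM}_{g,n}(X,\beta))\cong\bar{\cM}_{g,n}(\tau_b(X),\beta)$, and since $X$ is smooth and projective its truncation $\tau_b(X)$ is a smooth projective bosonic scheme; hence $\bar{\cM}_{g,n}(\tau_b(X),\beta)$ is a Deligne--Mumford bosonic stack by \cite{Behrend:1995}, which we regard as a super-stack through the inclusion $\iota$ of Definition \ref{bred}. In view of Lemma \ref{ADM}, it is then enough to produce a \emph{representable} morphism of stacks $\rho:\bar{\cM}_{g,n}(X,\beta)\to\bar{\cM}_{g,n}(\tau_b(X),\beta)$: granting this, $\bar{\cM}_{g,n}(X,\beta)$ is Deligne--Mumford as well, which is the assertion.

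The splitness of $X$ enters in the construction of $\rho$. Writing $X\cong\mathrm{Spec}_{\tau_b(X)}(\wedge^{\bullet}\mathcal{V})$ for a locally free sheaf $\mathcal{V}$ on $\tau_b(X)$, the algebra inclusion $\mathcal{O}_{\tau_b(X)}=\wedge^{0}\mathcal{V}\hookrightarrow\wedge^{\bullet}\mathcal{V}$ yields a projection $p:X\to\tau_b(X)$ with $p\circ(\tau_b(X)\hookrightarrow X)=\mathrm{id}$. We define $\rho$ by post-composition with $p$, that is $(\pi:\mathcal{C}\to T,\,\phi:\mathcal{C}\to X,\dots)\mapsto(\pi:\mathcal{C}\to T,\,p\circ\phi:\mathcal{C}\to\tau_b(X),\dots)$. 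This again defines a stable map: conditions (1) and (2) of Definition \ref{Stablemap} concern only the unchanged curves $\mathcal{C}_t$; condition (3) is unaffected because $p$ has zero-dimensional fibres, so a component of $\mathcal{C}_t$ is $\phi$-contracted if and only if it is $(p\circ\phi)$-contracted; and condition (4) holds since $p$ induces the identity on the homology of the analytic space attached to $\tau_b(X)$. The assignment commutes with base change in $T$, so $\rho$ is a morphism of CFGs, and over a bosonic base $\phi$ factors through $\tau_b(X)\hookrightarrow X$ by Lemma \ref{univ}, whence $\tau_b(\rho)$ is the identity.

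The key step is that $\rho$ is representable. Fix a scheme $T$ with a map $T\to\bar{\cM}_{g,n}(\tau_b(X),\beta)$, that is a stable map $\bar\phi:\mathcal{C}\to\tau_b(X)$ over $T$. Unwinding the $2$-fibre product, an object of $\bar{\cM}_{g,n}(X,\beta)\times_{\bar{\cM}_{g,n}(\tau_b(X),\beta)}T$ over $T'\to T$ is --- after transporting along the given isomorphism --- a lift of $\bar\phi_{T'}$ to a morphism $\mathcal{C}_{T'}\to X$ over $\tau_b(X)$; that same isomorphism rigidifies the curve, so any automorphism of such a lift is forced to be the identity and the $2$-fibre product is a set-valued functor. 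By the universal property of the relative spectrum, a lift is precisely an $\mathcal{O}_{\tau_b(X)}$-linear map $\mathcal{V}\to(\bar\phi_{T'})_{*}(\mathcal{O}_{\mathcal{C}_{T'}})_{f}$; since $\pi$ is flat and bosonic, Lemma \ref{flat2} identifies $(\mathcal{O}_{\mathcal{C}_{T'}})_{f}$ with the pullback to $\mathcal{C}_{T'}$ of the fermionic part of $\mathcal{O}_{T'}$, and a standard linearity argument then represents this functor of lifts by a relative affine super-scheme over $T$ --- an ``odd linear space'' attached to the cohomology of $\bar\phi^{*}\mathcal{V}^{\vee}$ along $\pi$ (resolved by a two-term complex of locally free sheaves if the relevant direct image fails to be locally free). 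A super-scheme is in particular an algebraic space, so $\rho$ is representable, and Lemma \ref{ADM} finishes the proof.

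I expect the representability of $\rho$ in the third step to be the main obstacle, and it is exactly there that ``split'' is indispensable: splitness turns the fermionic part of a stable map into a section of the pullback of a vector bundle, so lifting is unobstructed and contributes no fermionic automorphisms, hence is represented by an affine super-scheme rather than by a genuinely stacky object. Two minor points also need care but do not affect the argument --- that $\iota$ carries a bosonic Deligne--Mumford stack to a Deligne--Mumford super-stack (base-change an \'etale atlas of the bosonic stack), and the precise form of the sheaf defining the odd linear space when higher direct images intervene.
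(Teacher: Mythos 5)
Your proposal is correct and follows essentially the same route as the paper: reduce to the Behrend--Manin theorem for $\bar{\cM}_{g,n}(\tau_b(X),\beta)$, exhibit a representable morphism to it (your $\rho$ agrees with the paper's truncation morphism $\Xi$ once one identifies families over $T$ with families over $T_b$ via Lemma \ref{flat2}), and prove representability by describing lifts through the relative spectrum of $\wedge^{\bullet}\mathcal{V}$ as linear maps $\mathcal{V}\to\pi^*\mathcal{R}$, represented by the relative spectrum of $\wedge^{\bullet}\mathcal{Q}$ for the coherent sheaf $\mathcal{Q}$ supplied by cohomology-and-base-change (EGA III 7.6), before concluding with Lemma \ref{ADM}. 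Your "odd linear space resolved by a two-term complex" is exactly the paper's sheaf $\mathcal{Q}$.
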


\proof  We divide the proof of this theorem into several smaller steps.  To avoid a proliferation of notation where it is not needed, we denote $\bar{\cM}_{g,n}(X,\beta)$ by $\bar{\cM}(X)$ throughout this proof, and often suppress the notation for the image class, genus, and marked points when discussing stable maps.  The assumption that $X=(X_{b},\mathfrak{A})$ is a globally split scheme means that there exists a locally free sheaf $\mathcal{V}$ on $\tau_b(X)$ such that $\mathfrak{A}=\bigwedge^{\bullet}\mathcal{V}$ (i.e., the super-structure on $X$ is determined by an an exterior algebra).  We begin with the following observation about families of stable maps:
\begin{lemma}\label{Main1}
Let $\{\pi: \mathcal{C}\rightarrow T, x_i: T \to \mathcal{C} ,\phi: \mathcal{C}\rightarrow X\}$ be a family of stable maps parameterized by $T\in\ob(\mathsf{Sch})$.  Then $\{\mathcal{C}_{b}\rightarrow T_{b}, x_i: T_b \to \mathcal{C}_b,\mathcal{C}_{b}\rightarrow X_{b}\rightarrow \tau_b(X)\}$ is also a family of stable maps.
\end{lemma}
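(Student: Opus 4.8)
The plan is to unpack what it means for the given data over $T$ to be a family of stable maps, apply $\tau_b$ and the bosonic-truncation functor $(-)_b$ component-by-component, and verify that each of the four numbered conditions in Definition \ref{Stablemap} is preserved. The key structural input is that $\pi: \mathcal{C}\rightarrow T$ is assumed to be a \emph{bosonic} morphism; by Lemma \ref{bpi} this forces $\mathcal{C}_b \cong \mathcal{C} \times_T T_b$ (more precisely $\tau_b$ of $\mathcal{C}$ relative to $T$), so the ``underlying bosonic family'' $\mathcal{C}_b \to T_b$ is genuinely the base change of $\pi$ along $T_b \hookrightarrow T$. First I would record that properness, flatness, and the bosonic property are all stable under this base change (properness and flatness of $\pi_b: \mathcal{C}_b \to T_b$ follow from Lemma \ref{flat2} together with Proposition \ref{flat3}, or simply from the fact that $\pi$ restricted to bosonic test schemes is exactly $\pi_b$), and that the marked points $x_i: T \to \mathcal{C}$ restrict to sections $x_i: T_b \to \mathcal{C}_b$ by Lemma \ref{univ} (a morphism from the bosonic scheme $T_b$ to $\mathcal{C}$ factors uniquely through $\tau_b(\mathcal{C})$, which over $T_b$ is $\mathcal{C}_b$).

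Next I would address the geometric fibres. Over a geometric point $t: \mathrm{Spec}(\C) \to T_b$, the fibre of $\pi_b$ is $(\mathcal{C}_b)_t = (\mathcal{C}_t)_b$, which is just the fibre $\mathcal{C}_t$ of the original family viewed as a bosonic scheme — but since $\mathcal{C}_t$ was already a reduced, connected, one-dimensional \emph{bosonic} scheme (a possibly-singular Riemann surface), nothing changes here. Consequently conditions (1), (2), and (3) of Definition \ref{Stablemap} — the genus computation $\dim H^1(\mathcal{O}_{\mathcal{C}_t}) = g$, the ordinary-double-point condition on singularities, and the stability count of special points on contracted components — are literally the same statements for $\mathcal{C}_b \to T_b$ as for $\mathcal{C} \to T$, because all of these are conditions on the (bosonic) geometric fibres and their marked/singular points, which are unchanged. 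The notions of ``special point'' and ``contracted component'' refer to the composite map to the bosonic target, so I need to check they transport correctly, which is the content of the final paragraph.

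The remaining point, and the one requiring a small argument, is the map $\phi$ and the homology condition (4). The composite $\mathcal{C}_b \hookrightarrow \mathcal{C} \xrightarrow{\phi} X \to \tau_b(X)$ is a morphism of bosonic schemes by construction (the last arrow is the closed immersion $\tau_b(X) \hookrightarrow X$ post-composed appropriately — more precisely, a map from the bosonic scheme $\mathcal{C}_b$ to $X$ factors uniquely through $\tau_b(X)$ by Lemma \ref{univ}), giving $\phi_b: \mathcal{C}_b \to \tau_b(X)$. Since homology on $X$ is by definition the homology of the analytic space attached to $\tau_b(X)$ (as stated just after Definition \ref{Stablemap}), and since $\phi_b$ and $\phi$ induce the same map on the topological/analytic level — the underlying continuous map of $\phi$ already lands in $\tau_b(X)$ topologically because $X$ and $\tau_b(X)$ share the same underlying bosonic topological space by Lemma \ref{ideals} — the pushforward $(\phi_b)_*[\mathcal{C}_t] = \phi_*[\mathcal{C}_t] = \beta$ is unchanged, and the class of contracted components is identified. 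Hence $\{\mathcal{C}_b \to T_b, x_i, \phi_b\}$ satisfies all four conditions and is a family of stable maps over $T_b$. I expect the only genuine subtlety is bookkeeping the identification $\mathcal{C}_b \cong \mathcal{C}\times_T T_b$ via the bosonic hypothesis on $\pi$; everything downstream of that is a routine transport of fibrewise conditions, since $\tau_b$ and bosonic truncation do not disturb the already-bosonic geometric fibres.
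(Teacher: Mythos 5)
Your proposal is correct and follows essentially the same route as the paper: the paper's proof likewise observes that the stability conditions are purely bosonic, fibrewise statements that are untouched by truncation, and reduces the whole lemma to the flatness of the underlying morphism $\mathcal{C}_b\rightarrow T_b$, which it obtains from Proposition \ref{flat3} exactly as you do. The only caveat is a bookkeeping slip you already flag yourself: $\mathcal{C}_b$ denotes the underlying bosonic scheme rather than $\tau_b(\mathcal{C})$, so the relevant identification is $\mathcal{C}\cong T\times_{T_b}\mathcal{C}_b$ from Lemma \ref{flat2} rather than the base change supplied by Lemma \ref{bpi}; this does not affect the argument.
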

\proof  Recall from Definition \ref{Stablemap} that stability is determined by the fibers of the family having finite automorphism group; this is a purely bosonic property, so it follows that $\{\mathcal{C}_{b}\rightarrow T_{b}, \mathcal{C}_{b}\rightarrow X_{b}\rightarrow \tau_b(X)\}$ is a family of stable maps in the bosonic sense of Kontsevich \cite{Kontsevich:1995} provided the underlying morphism $\mathcal{C}_{b}\rightarrow T_{b}$ is flat.  But by definition $\pi: \mathcal{C}\rightarrow T$ is a flat, bosonic morphism, so by Proposition \ref{flat3}, the result follows.     $\blacksquare$

The bosonic reduction sending any family of stable maps to the underlying bosonic family of stable maps induces a functor between categories which we will interpret as a morphism of CFGs: $\Xi: \bar{\cM}(X)\rightarrow\tau_b(\bar{\cM}(X))$.  Furthermore, $\tau_b(\bar{\cM}(X))=\bar{\cM}(\tau_b(X))$ as was stated in Lemma \ref{taubM}. 

\begin{lemma}\label{Main4}
The morphism $\Xi:\bar{\cM}(X)\rightarrow\bar{\cM}(\tau_b(X))$ is representable.
\end{lemma}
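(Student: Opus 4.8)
The plan is to verify Definition \ref{rep} directly. Fix a scheme $S$ together with a morphism $S\to\bar{\cM}(\tau_b(X))$; by Definition \ref{bred} and Lemma \ref{univ} this is the same datum as a bosonic family of stable maps $\{p_0:\mathcal{C}_0\to S_b,\ x_{0,i},\ \phi_0:\mathcal{C}_0\to\tau_b(X)\}$ over $S_b$. We must show that the $2$-fibre product $\mathcal{F}:=\bar{\cM}(X)\times_{\bar{\cM}(\tau_b(X))}S$ is an algebraic space. An object of $\mathcal{F}$ over a scheme $u:T\to S$ is a family of stable maps $\{\pi:\mathcal{C}\to T,\ x_i,\ \phi:\mathcal{C}\to X\}$ into $X$, together with an isomorphism in $\bar{\cM}(\tau_b(X))(T)$ between $\Xi$ of this family and the pullback of the bosonic family along $T_b\to S_b$. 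The strategy is to show that the remaining data are so rigid that $\mathcal{F}$ is in fact (the functor of points of) a scheme affine over $S$; this representability is the crux of the argument, since via Lemma \ref{taubM} and Lemma \ref{ADM} it reduces Theorem \ref{Main} to the classical Deligne-Mumford property of $\bar{\cM}(\tau_b(X))$.

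The first point is that the curve and marked points carry no moduli. By Definition \ref{Stablemap} the projection $\pi:\mathcal{C}\to T$ is flat and bosonic, so Lemma \ref{flat2} forces $\mathcal{C}\cong T\times_{T_b}\mathcal{C}_b$; feeding in the identification $\mathcal{C}_b\cong\mathcal{C}_0\times_{S_b}T_b$ contained in an object of $\mathcal{F}$ gives a canonical isomorphism $\mathcal{C}\cong\mathcal{C}_0\times_{S_b}T$, and the marked points are forced to be the pullbacks of the $x_{0,i}$ (using Lemma \ref{univ}, since $\mathcal{C}_0$ is bosonic). Hence an object of $\mathcal{F}$ over $T$ is nothing but a morphism $\phi:\mathcal{C}_0\times_{S_b}T\to X$ whose composite with $X\to\tau_b(X)$ is the pullback of $\phi_0$. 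Here the splitness hypothesis enters: writing $X=\mathrm{Spec}_{\tau_b(X)}(\wedge^{\bullet}\mathcal{V})$, such a $\phi$ is exactly an $\cO_{\mathcal{C}}$-linear map $\phi_0^{*}\mathcal{V}\to(\cO_{\mathcal{C}})_f$ into the odd part of the structure sheaf, and by the forced product structure of $\mathcal{C}$ this odd part is the pullback along $\mathcal{C}_b\to T_b$ of the odd part of the structure sheaf of $T$. Pushing forward along $p_0:\mathcal{C}_0\to S_b$ (projection formula, working Zariski-locally on $S_b$ where a two-term locally free presentation of $Rp_{0*}\phi_0^{*}\mathcal{V}^{\vee}$ is available) identifies $\mathcal{F}$, as a functor on $\mathsf{Sch}/S$, with $T\mapsto\Hom_{\cO_{T_b}}(g^{*}\mathcal{G},(\cO_{T})_f)$ for a suitable coherent sheaf $\mathcal{G}$ on $S_b$. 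This functor is additive in the odd structure sheaf of $T$, hence represented by $S\times_{S_b}\mathrm{Spec}_{S_b}(\wedge^{\bullet}\mathcal{G})$ with $\mathcal{G}$ placed in odd degree; this is a scheme affine over $S$, in particular an algebraic space.

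The step I expect to be the real obstacle — and the only one genuinely using the hypotheses rather than bookkeeping — is verifying that $\mathcal{F}$ is a set-valued sheaf and not merely a category fibered in groupoids; equivalently, that the relative inertia of $\Xi$ is trivial, i.e. that any automorphism of a stable map into $X$ inducing the identity on its bosonic reduction is the identity. The two observations above make this plausible: the curve is the constant superization $\mathcal{C}_0\times_{S_b}T$, on which an automorphism over $T$ restricting to $\mathrm{id}_{\mathcal{C}_b}$ must be trivial (again Lemma \ref{flat2} and Lemma \ref{univ}), and the only further datum $\phi$ is linear and rigid — but this is exactly where one uses that $\pi$ is bosonic and that $X$ is split. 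The remaining work is routine: matching the marked-point sections, checking naturality in $T$ of the identification of $\mathcal{F}$ with the linear functor, and absorbing the failure of $Rp_{0*}$ to commute with base change into the choice of the coherent sheaf $\mathcal{G}$ corepresenting the relevant linear functor. Once these are settled, $\Xi$ is representable — in fact affine — as claimed.
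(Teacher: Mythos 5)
Your proposal is correct and follows essentially the same route as the paper: rigidify the curve and marked points via Lemma \ref{flat2}, use splitness to reduce the remaining datum to an $\cO_{\mathcal{C}}$-linear map $\phi_0^{*}\mathcal{V}\to(\cO_{\mathcal{C}})_f$, and corepresent the pushforward functor by a coherent sheaf (your $\mathcal{G}$ is the paper's $\mathcal{Q}$, obtained by citing EGA III 7.6 rather than your explicit two-term presentation), concluding that the fibre product is the relative spectrum of $\wedge^{\bullet}\mathcal{G}$ over $S$. Your explicit attention to the triviality of the relative inertia is a point the paper passes over silently, but it is a refinement of the same argument rather than a different one.
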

\proof  Let $T$ and $W$ be affine schemes and consider the following set-up:
\begin{equation*}
\xymatrix{
W \ar@/_/[ddr] \ar@/^/[drr] \ar@{-->}[dr] \\
& P \ar[d] \ar[r] & T \ar[d] \\
& \bar{\cM} \ar[r]^{\Xi} & \tau_b(\bar{\cM}) }
\end{equation*}
where $W$ is the test scheme for the pullback and the square is cartesian.  As $\tau_b(\bar{\cM})$ is bosonic, it follows that the morphism $T\rightarrow\tau_b(\bar{\cM})$ is fully captured by $T_{b}\rightarrow\tau_b(\bar{\cM})$, and identical statements can be made for every other morphism or composition of morphisms to $\tau_b(\bar{\cM})$ appearing in this diagram.  To see this, use Lemma \ref{flat2}, which implies that $$\mathcal{C} \cong \mathcal{C}_b \times_{T_b} T.$$ Therefore the data of a family of curves parametrized by $T$ is equivalent to a family of curves parametrized by $T_b$.  Similarly the sections $$x_i: T \to \mathcal{C}$$ can be recovered from their bosonic part. As we have seen the only part of a family of stable maps amenable to the super-structure on $X$ is the map $\phi: \cC \to X$. Henceforth we may assume that $T$ is bosonic.

We thus have a bosonic scheme $T$ and a family of stable maps $\{\pi: \cC \to T, x_i: T \to \cC ,\phi: \cC \to \tau_b(X)\}$, which we pullback to a family of stable maps into $\tau_b(X)$ parametrized by $W_b$. In order to compute the fibre product in question we need to describe in which ways this family of stable maps can be extended to a stable map into $X$. If $\mathcal{R}$ denotes the fermionic component of the sheaf of super-rings on $W$, and $X = \mathrm{Spec}_{\tau_{b}(X)}(\bigwedge^{\bullet} \mathcal{V})$ then those extensions are given by
$$ \pi_*(\phi^*\mathcal{V}^{\vee} \otimes \pi^*\mathcal{R}). $$
Here we have implicitly used Lemma \ref{flat2}, since $\pi^*\mathcal{R}$ is the fermionic component of the family of curves parametrized by $W$. We then apply 7.6 of \cite{EGA:3} to see that there exists a coherent sheaf $\mathcal{Q}$ on $T$ such that for every quasi-coherent sheaf $\mathcal{R}$ on $T$,
\begin{equation}\label{eqn:Q}
\pi_{*}\left((\phi^{*}\mathcal{V})^{\vee}\otimes\pi^{*}\mathcal{R}\right)=\mathcal{H}\mathrm{om}_{T}(\mathcal{Q},\mathcal{R}).
\end{equation}
One can show that this construction is compatible with base change (see remark 7.9 of \cite{EGA:3}).  Then using $\mathcal{Q}$ as the generator for an exterior algebra, we see that morphisms $\phi^*\mathcal{V} \to \pi^*\mathcal{R}$ correspond to $\wedge^{\bullet}\mathcal{Q} \to \mathcal{R}$, giving the required universal property and completing the proof. $\blacksquare$

(\emph{End of proof of Theorem \ref{Main}}).  By assumption, $\tau_b(\bar{\cM}(X))=\bar{\cM}(\tau_b(X))$ is a Deligne-Mumford stack, and by construction it is easy to see that $\bar{\cM}(X)$ is a CFG over the category of schemes.  Then using the fact that $\Xi:\bar{\cM}(X)\rightarrow\bar{\cM}(\tau_b(X))$ is representable from Lemma \ref{Main4}, it follows by Lemma \ref{ADM} that $\bar{\cM}(X)$ is also a Deligne-Mumford stack, as required.     $\Box$

\bigskip

The defining equation \ref{eqn:Q} of the sheaf $\mathcal{Q}$ on $\bar{\cM}_{g,n}(X,\beta)$ allows us to make the following observation:

\begin{lemma}\label{Main5}
$\mathcal{Q}^{\vee} = \pi_*(\phi^*\mathcal{V}^{\vee})$
\end{lemma}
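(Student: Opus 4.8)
The plan is to read off the identity directly from the defining property \eqref{eqn:Q} of $\mathcal{Q}$ by specializing the auxiliary sheaf $\mathcal{R}$ to the structure sheaf. First I would recall that $\mathcal{Q}$ was constructed in the proof of Lemma \ref{Main4} (via 7.6 of \cite{EGA:3}), and characterized by the existence of an isomorphism, natural as $\mathcal{O}$-modules in the quasi-coherent sheaf $\mathcal{R}$,
\begin{equation*}
\pi_{*}\left((\phi^{*}\mathcal{V})^{\vee}\otimes\pi^{*}\mathcal{R}\right)\;\cong\;\mathcal{H}\mathrm{om}_{T}(\mathcal{Q},\mathcal{R}),
\end{equation*}
where $\pi$ and $\phi$ now denote the \emph{universal} curve and universal stable map over $\bar{\cM}_{g,n}(X,\beta)$ (the construction being compatible with base change, it glues from the affine test schemes $T$ to a statement over the moduli stack). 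Since $\mathcal{O}_{T}$ is itself quasi-coherent, I would simply evaluate this isomorphism at $\mathcal{R}=\mathcal{O}_{T}$.

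On the right-hand side one obtains $\mathcal{H}\mathrm{om}_{T}(\mathcal{Q},\mathcal{O}_{T})$, which is by definition the dual sheaf $\mathcal{Q}^{\vee}$. On the left-hand side, $\pi^{*}\mathcal{O}_{T}\cong\mathcal{O}_{\mathcal{C}}$, so the tensor factor drops out and one is left with $\pi_{*}\left((\phi^{*}\mathcal{V})^{\vee}\right)$; since $\mathcal{V}$, and hence $\phi^{*}\mathcal{V}$, is locally free, this coincides with $\pi_{*}(\phi^{*}\mathcal{V}^{\vee})$, matching the notation of the statement. Comparing the two sides yields $\mathcal{Q}^{\vee}=\pi_{*}(\phi^{*}\mathcal{V}^{\vee})$.

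The only point requiring a little care is that \eqref{eqn:Q} must be used as an isomorphism of sheaves functorial in $\mathcal{R}$ — which is exactly what 7.6 of \cite{EGA:3} supplies, and is also the source of the base-change compatibility remarked upon after \eqref{eqn:Q} — rather than merely as a bijection on global sections; granting this, there is no genuine obstacle, and I do not expect any difficulty beyond bookkeeping. Conceptually the lemma just records that $\mathcal{Q}$ is the sheaf whose associated exterior algebra governs the fermionic directions of the moduli problem, so that its dual is the direct image of the dualized pullback of the bundle $\mathcal{V}$ defining the split target $X$.
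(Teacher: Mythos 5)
Your proposal is correct and follows exactly the paper's own (one-line) argument: the authors likewise obtain the identity by setting $\mathcal{R}=\mathcal{O}$ in equation \eqref{eqn:Q}, so that the right-hand side becomes $\mathcal{H}\mathrm{om}_{T}(\mathcal{Q},\mathcal{O})=\mathcal{Q}^{\vee}$ and the left-hand side reduces to $\pi_{*}(\phi^{*}\mathcal{V}^{\vee})$. Your additional remarks on functoriality in $\mathcal{R}$ and base-change compatibility are sound elaborations of the same step rather than a different route.
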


\begin{proof}
This follows from setting $\mathcal{R} = \mathcal{O}$ in equation (\ref{eqn:Q}). $\Box$
\end{proof}

Since $\bar{\cM}_{g,n}(\tau_b(X),\beta)$ is a bosonic moduli stack of stable maps, there exists a universal curve $(\widetilde{\Sigma},\widetilde{\underline{n}})$ and a \emph{universal instanton}:
\be{eqn: bunivinst}
\xymatrix{
 (\widetilde{\Sigma},\widetilde{\underline{n}}) \ar[d]^{\rho} \ar[r]^{\Phi} & \tau_b(X) \\
 \bar{\cM}_{g,n}(\tau_b(X),\beta) & }
\ee 
The universal instanton construction allows us to pull back geometric structures from the target $\tau_b(X)$ and then push them down onto the moduli stack. A similar construction was unravelled in Lemma \ref{Main4}. The maps $\pi$ and $\phi$ in this Lemma are obtained from the universal instanton maps $\rho$ and $\Phi$ by base change. In particular we see that the stack $\bar{\cM}_{g,n}(\tau_b(X),\beta)$ is endowed with a sheaf $\mathcal{Q} = (\rho_*(\Phi^*\mathcal{V}^{\vee}))^{\vee}$, which generates an exterior algebra $\wedge^{\bullet} \mathcal{Q}$, giving rise to the moduli stack $\bar{\cM}_{g,n}(X,\beta)$ by the relative spectrum construction. We may therefore conclude that $\bar{\cM}_{g,n}(X,\beta)$ is a split Deligne-Mumford super-stack.

From this result and the known theorems for the bosonic stack (Theorem 3.14 of \cite{Behrend:1995}), we get an easy corollary which addresses the cases of interest in twistor theory:
\begin{corol}\label{BMc}
Let $X$ be any split smooth projective $\C$-scheme, then $\bar{\cM}_{g,n}(X,\beta)$ is a split Deligne-Mumford stack. In particular, $$\bar{\cM}_{g,n}(\P^{p|q},\beta)$$ is a split Deligne-Mumford stack.
\end{corol}

As before we know that $\bar{\cM}_{g,n}(X,\beta)$ is a moduli stack of stable maps, and thus there exists a universal curve $(\widetilde{\Sigma},\widetilde{\underline{n}})$ and a \emph{universal instanton} in the super-context as well:
\be{eqn: univinst}
\xymatrix{
 (\widetilde{\Sigma},\widetilde{\underline{n}}) \ar[d]^{\rho} \ar[r]^{\Phi} & X \\
 \bar{\cM}_{g,n}(X,\beta) & }
\ee 


\subsection{Other properties of the moduli stack}

In this section, we assume that Theorem \ref{Main} holds, so $\bar{\cM}_{g,n}(X,\beta)$ is a Deligne-Mumford stack; as noted by Corollary \ref{BMc}, this will be true for most schemes $X$ which arise in physical applications (i.e., split projective smooth $\C$-schemes).  The properties of the bosonic stack of stable maps to a bosonic scheme are well-studied, and the various properties and underlying structures of this space are by now well-known (c.f., \cite{Fulton:1997,Cox:1999}).  Most of these properties carry over without change to the super-geometric setting, and we review them here briefly.

Let $\Sigma$ be some Riemann surface of genus $g$; recall from definition \ref{Stablemap} the notion of a stable map $\phi:\Sigma\rightarrow X$ (for some scheme $X$). More formally, we can think of this as restricting our attention to a $\C$-family of stable maps in $\bar{\cM}_{g,n}(X,\beta)$. We represent this single object in $\bar{\cM}_{g,n}(X,\beta)$ by $(\Sigma, \underline{n}, \phi)$, where $\underline{n}$ is shorthand for the set of special points $\{x_{1},\ldots,x_{n}\}$ on $\Sigma$.

The moduli stack $\bar{\cM}_{g,n}(X,\beta)$ comes equipped with several natural maps.  These include the ``evaluation maps''
\be{eqn: eval}
\mathrm{ev}_{i}: \bar{\cM}_{g,n}(X,\beta)\rightarrow X, \qquad (\Sigma, \underline{n},\phi)\mapsto \phi(x_{i}),
\ee
which can be tensored together in the obvious fashion to give
\be{eqn: evaln}
\mathrm{Ev}:\bar{\cM}_{g,n}(X,\beta)\rightarrow X^{n}.
\ee
Since $\Sigma$ is a bosonic Riemann surface of genus $g$, the underlyings stacks $\bar{\cM}_{g}$ and $\bar{\cM}_{g,n}$ (stable curves of genus $g$ and stable curves of genus $g$ with $n$ marked points, respectively) are the ordinary bosonic stacks of Deligne and Mumford \cite{Deligne:1969}, with dimensions $3g-3$ and $3g-3+n$ respectively.  Since $(\Sigma,\underline{n},\phi)$ need not be a stable curve on its own, we can define a projection to $\bar{\cM}_{g,n}$ in the usual way: provided $n+2g\geq 3$, simply contract the destabilizing components of $\Sigma$ to obtain a stable curve $\widehat{\Sigma}$.  This defines a functor
\be{eqn: curvfunc}
\kappa: \bar{\cM}_{g,n}(X,\beta)\rightarrow \bar{\cM}_{g,n}, \qquad (\Sigma,\underline{n},\phi)\mapsto (\widehat{\Sigma},\underline{n}).
\ee
When $\bar{\cM}_{g,n}(X,\beta)$ posesses a coarse moduli space, this should descend to a morphism following the techniques used in the bosonic setting \cite{Knudsen:1983}.

We also have the forgetful functor
\be{eqn: forget}
\rho_{n}: \bar{\cM}_{g,n}(X,\beta)\rightarrow\bar{\cM}_{g,n-1}(X,\beta).
\ee
This functor is defined as in the bosonic setting, where it inherits its structure from the underlying functor between bosonic stacks $\bar{\cM}_{g,n}\rightarrow\bar{\cM}_{g,n-1}$, which forgets the marked point $x_{n}$ and contracts any resulting destabilizing components of $\Sigma$.  The forgetful functors are, of course, well-defined only when both the source and target in \eqref{eqn: forget} exist.  

An important property of bosonic stacks is the fact that they can have impure dimension: deformations of the moduli stack can be obstructed and the dimension of the space can change when points in a family are obstructed.  Nevertheless, one can still compute the expected or \emph{virtual} dimension of the moduli stack, which corresponds to the dimension of the space when the deformation theory is unobstructed.  We can apply the tangent-obstruction complex techniques of \cite{Li:1998}, along with a super-geometric generalization of the Hirzebruch-Riemann-Roch theorem to find for the virtual super-dimension
\be{eqn: vdim1}
\mathrm{vsdim} \; \bar{\cM}_{g,n}(X,\beta)=(1-g)(\sdim X-3)-\int_{\beta}\omega_{X}+n,
\ee
where $\sdim X$ is the super-dimension of $X$ and $\omega_{X}$ is the canonical class of $X$.  Recall that when $X$ is a split scheme, $X=(X_{b},\wedge^{\bullet}\mathcal{V})$ for a locally free sheaf $\mathcal{V}$ on $\tau_b(X)$.  In this case, $\sdim X=\dim \tau_b(X)-\mathrm{rank}\; \mathcal{V}$.

As in the bosonic case, the dimension of $\bar{\cM}_{0,n}(X,\beta)$ is equal to the expected (virtual) dimension when $H^{1}(C,\phi^{*}T_{X})=0$ for all genus zero stable maps $\phi$; this means that the deformation theory of the moduli stack is unobstructed \cite{Kontsevich:1994qz, Fulton:1997}.  We will use the same terminology as in the bosonic category, and refer to such target schemes $X$ as \emph{convex}; an easy example of a convex scheme is the projective space $\P^{p|q}$.

In the bosonic category, the moduli stack will be smooth when we restrict to genus zero and the the target scheme is convex.  The machinery from the proof of Theorem \ref{Main} allows us to make an analogous statement in our super-geometric setting:

\begin{propn}\label{Main2}
Let $X$ be a split smooth projective scheme which is convex. Then $\bar{\cM}_{0,n}(X,\beta)$ is a split, smooth Deligne-Mumford stack; in particular, $$\bar{\cM}_{0,n}(\P^{p|q},\beta)$$ is smooth.
\end{propn}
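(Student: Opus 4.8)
The plan is to combine the split description of the moduli stack produced in the proof of Theorem~\ref{Main} with the classical smoothness result for bosonic targets. By Corollary~\ref{BMc} we already know that $\bar{\cM}_{0,n}(X,\beta)$ is a split Deligne--Mumford stack, so only the smoothness assertion remains. Recall that, by the discussion following Theorem~\ref{Main} together with Lemma~\ref{Main5},
\[
\bar{\cM}_{0,n}(X,\beta)\;\cong\;\mathrm{Spec}_{\bar{\cM}_{0,n}(\tau_b(X),\beta)}\bigl(\textstyle\bigwedge^{\bullet}\mathcal{Q}\bigr),\qquad \mathcal{Q}^{\vee}=\rho_{*}\bigl(\Phi^{*}\mathcal{V}^{\vee}\bigr),
\]
where $\rho$ and $\Phi$ are the universal curve and universal instanton \eqref{eqn: bunivinst} of the bosonic moduli stack. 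Unwinding what it means for this split Deligne--Mumford super-stack to be smooth --- namely that its bosonic truncation is a smooth bosonic Deligne--Mumford stack and that the defining exterior algebra is that of a locally free sheaf --- the Proposition reduces to two claims: (i) $\bar{\cM}_{0,n}(\tau_b(X),\beta)$ is a smooth Deligne--Mumford stack, and (ii) $\mathcal{Q}$ is locally free on it.

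First I would rewrite the convexity hypothesis in split coordinates. Since $X=\mathrm{Spec}_{\tau_b(X)}(\bigwedge^{\bullet}\mathcal{V})$ is split, the restriction of the super-tangent sheaf to the bosonic truncation decomposes as $T_X|_{\tau_b(X)}\cong T_{\tau_b(X)}\oplus\Pi\mathcal{V}^{\vee}$. Any genus-zero stable map $\phi:C\to X$ factors through $\tau_b(X)$ by Lemma~\ref{univ}, so $H^{1}(C,\phi^{*}T_X)\cong H^{1}(C,\phi^{*}T_{\tau_b(X)})\oplus H^{1}(C,\phi^{*}\mathcal{V}^{\vee})$; hence convexity of $X$ is equivalent to convexity of the bosonic scheme $\tau_b(X)$ together with the vanishing $H^{1}(C,\phi^{*}\mathcal{V}^{\vee})=0$ for every genus-zero stable map. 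Claim (i) then follows immediately from the classical theorem (see \cite{Kontsevich:1994qz, Fulton:1997, Behrend:1995}): for a convex bosonic target, the moduli stack of genus-zero stable maps is a smooth Deligne--Mumford stack of the expected dimension.

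For claim (ii) I would invoke cohomology and base change. The sheaf $\Phi^{*}\mathcal{V}^{\vee}$ is locally free on the universal curve $\widetilde{\Sigma}$, and so flat over $\bar{\cM}_{0,n}(\tau_b(X),\beta)$, whose structure morphism $\rho$ is proper and flat with one-dimensional geometric fibres; over the point corresponding to a stable map $\phi:C\to\tau_b(X)$ the restriction of $\Phi^{*}\mathcal{V}^{\vee}$ is precisely $\phi^{*}\mathcal{V}^{\vee}$, which has vanishing $H^{1}$ by the analysis above. Grauert's theorem (equivalently the base-change criterion of \cite{EGA:3}) then forces $R^{1}\rho_{*}(\Phi^{*}\mathcal{V}^{\vee})=0$ and $\rho_{*}(\Phi^{*}\mathcal{V}^{\vee})$ to be locally free with its formation commuting with arbitrary base change; thus $\mathcal{Q}^{\vee}$, and therefore $\mathcal{Q}$, is locally free. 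Combining (i) and (ii), $\bar{\cM}_{0,n}(X,\beta)$ is the relative spectrum of the exterior algebra on a locally free sheaf over a smooth bosonic Deligne--Mumford stack, which is exactly the condition for it to be a split, smooth Deligne--Mumford super-stack.

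Finally, for the $\P^{p|q}$ statement I would note that $\P^{p|q}$ is split with $\mathcal{V}=\mathcal{O}_{\P^{p}}(-1)^{\oplus q}$, so $\mathcal{V}^{\vee}=\mathcal{O}_{\P^{p}}(1)^{\oplus q}$; for any genus-zero stable map $\phi:C\to\P^{p}$ the line bundle $\phi^{*}\mathcal{O}(1)$ has non-negative degree on every component of the tree of rational curves underlying $C$, so $H^{1}(C,\phi^{*}\mathcal{V}^{\vee})=0$, and $\P^{p}$ is the standard example of a convex bosonic scheme; hence $\P^{p|q}$ is convex and the Proposition applies. The step I expect to demand the most care is the cohomology-and-base-change argument in (ii) --- checking flatness of $\Phi^{*}\mathcal{V}^{\vee}$ over the (stacky, nodal-fibred) base and identifying the fibrewise $H^{1}$ with $H^{1}(C,\phi^{*}\mathcal{V}^{\vee})$ --- together with making the decomposition $T_X|_{\tau_b(X)}\cong T_{\tau_b(X)}\oplus\Pi\mathcal{V}^{\vee}$ precise enough to transport convexity cleanly between $X$ and $\tau_b(X)$.
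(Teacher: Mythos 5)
Your proposal is correct and follows essentially the same route as the paper: split the convexity hypothesis into convexity of $\tau_b(X)$ plus $H^{1}(C,\phi^{*}\mathcal{V}^{\vee})=0$ (via $\mathcal{V}^{\vee}$ being the fermionic part of $T_X$), deduce smoothness of the bosonic stack from the classical theorem, and show $\mathcal{Q}$ is locally free so that the relative spectrum of $\bigwedge^{\bullet}\mathcal{Q}$ is smooth. The only (immaterial) difference is the tool used for local freeness of $\mathcal{Q}$: you invoke cohomology-and-base-change from the fibrewise vanishing of $H^{1}$, whereas the paper argues that Riemann--Roch makes the fibre rank of $H^{0}$ constant and that a coherent sheaf of constant rank on the (reduced, because smooth) bosonic stack is locally free.
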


\begin{proof}
Let $X$ be equivalent to the relative spectrum of $\wedge^{\bullet} \mathcal{V}$ on $\tau_b(X)$, where $\mathcal{V}$ is a locally free sheaf on $\tau_b(X)$. The convexity condition is now equivalent to $\tau_b(X)$ being convex and for every map $\phi: \P^1 \to \tau_b(X)$ we have $H^1(\P^1,\phi^*\mathcal{V}^{\vee}) = 0$. To see this one observes that $\mathcal{V}^{\vee}$ is the fermionic part of the tangent sheaf $T_X$. 

Let $S$ be a versal family for the stack $\bar{\cM}_{0,n}(\tau_b(X),\beta)$. In the proof of  Lemma \ref{Main4} we are able to set $\mathcal{R} = \mathcal{O}_{s}$ for every point $s \in S$, as in Lemma \ref{Main5}; note that $\mathcal{O}_s$ is meant to be the structure sheaf of the point $s$. We see that $(\mathcal{Q} \otimes\mathcal{O}_s)^{\vee}$ is the same as $H^0(\pi^{-1}(s),\phi^*\mathcal{V}^{\vee})$ from equation \ref{eqn:Q}. The Riemann-Roch formula and the convexity condition for the fermionic part implies now that the rank of this vector space is constant. We conclude that $\mathcal{Q}$ is a coherent sheaf on $S$ of constant rank. The convexity condition for the bosonic part implies that $\bar{\cM}_{0,n}(\tau_b(X),\beta)$ is smooth, and a coherent sheaf of constant rank on a reduced algebraic space is locally free. This implies that $\bar{\cM}_{0,n}(X,\beta)$ is a smooth Deligne-Mumford stack.   $\Box$
\end{proof}

\subsubsection*{\textit{Example: Witten's construction for} $\bar{\cM}_{0,0}(\P^{p|q},d)$}

For a simple reality check on our formula for the virtural super-dimension of the moduli stack, we consider $\bar{\cM}_{0,0}(\P^{p|q},\beta)$.  As in the bosonic case, $H_{2}(\P^{p|q},\Z)\cong\Z$, so we can write $\beta=d[\ell]$, where $[\ell]$ is the class of a line and $d$ is the degree of the stable map.  This allows us to abbreviate $\bar{\cM}_{g,n}(\P^{p|q},\beta)$ by $\bar{\cM}_{g,n}(\P^{p|q},d)$.  We now review Witten's \cite{Witten:2003nn} construction of a versal family for $\bar{\cM}_{0,0}(\P^{p|q},d)$ on a dense open subset.  We adopt a very heuristic view, treating a stable map to $\P^{p|q}$ as a map from $\P^1$; in reality, this should be tensored with some super-ring so as not to violate Lemma \ref{univ}, but we ignore these subtleties here.

The basic idea is to construct a simple versal family of stable maps for a dense open subset of the moduli space.  On $\P^{p|q}$ choose homogeneous coordinates $Z^{I}=(Z^{1},\ldots,Z^{p+1},\psi^{1},\ldots,\psi^{q})$, and let $\sigma=(\sigma_{1},\sigma_{2})\in\P^1$ be homogeneous coordinates on our genus zero Riemann surface.  Away from the boundary divisor in $\bar{\cM}_{0,0}(\P^{p|q},d)$ (i.e., for irreducible curves only) a degree $d$ map $Z^{I}:\P^{1}\rightarrow\P^{p|q}$ can be written as:
\begin{equation*}
Z^{I}(\sigma)=\sum_{r=0}^{d}U^{I}_{r}\sigma^{r}.
\end{equation*}
\textit{A priori}, the moduli of such a map are the coefficients $\{U_{r}^{I}\}$, which span a linear superspace:
\begin{equation*}
\mathrm{span} \{U_{r}^{I}\}=\mathbb{L}\cong\C^{(p+1)d+p+1|qd+q}.
\end{equation*}
Since the $Z^{I}$ are homogeneous coordinates, we must account for the re-scalings $Z^{I}\rightarrow tZ^{I}$ for $t\in\C^{*}$.  This reduces $\mathbb{L}$ to a projective linear space $\P\mathbb{L}\cong\P^{(p+1)d+p|qd+q}$.  Additionally, the map cannot vanish since the $Z^{I}$ are homogeneous, so we must cut out those $\{U_{r}^{I}\}$ which correspond to the zero locus.  Finally, we must account for the automorphism group of the Riemann surface, $\mathrm{PGL}(2,\C)$.  

Hence, we are left with an open subset of a projective super-space which can be identified with a versal family for the stack away from the boundary divisor and zero-locus (i.e., on a dense open subset):
\be{eqn: wms}
\bar{\cM}_{0,0}(\P^{p|q},d)\supset\cM_{0,0}(\P^{p|q},d) \cong (\P\mathbb{L} - \mbox{Zero Locus})/\mathrm{PGL}(2,\C).
\ee
From this we can immediately read off the expected super-dimension: $p-q-3+d(p-q+1)$.  Note that the stack properties are immediately obvious from the explicit presence of the $\mathrm{PGL}(2,\C)$-quotient.

Now, consider our formula for the virtual super-dimension \eqref{eqn: vdim1}:
\begin{equation*}
\mathrm{vsdim} \bar{\cM}_{0,0}(\P^{p|q},d)=\sdim\P^{p|q}-3-\int_{d[\ell]}\omega_{\P^{p|q}}.
\end{equation*}
We know that the canonical sheaf of $\P^{p}$ is $\cO(-p-1)$, and from \eqref{eqn: ss} $\mathfrak{A}_{\P^{p|q}}=\wedge^{\bullet}\cO(1)^{\oplus q}$, so
\begin{equation*}
-\int_{d[\ell]}\omega_{\P^{p|q}}=d(p+1)-dq,
\end{equation*}
and our formula for $\mathrm{vsdim} \bar{\cM}_{0,0}(\P^{p|q},d)$ agrees precisely with what is predicted by \eqref{eqn: wms}.


\section{Discussion \& Conclusion}
\label{Conclusion}

In this paper, we have constructed a Deligne-Mumford moduli stack of stable maps from a Riemann surface to a scheme in the context of super-geometry, and studied its properties.  On a purely mathematical level, this investigation re-emphasizes the utility of stacks for representing moduli problems and also demonstrates the extent to which super-geometric objects inherit many of their characteristics from the underlying bosonic geometry.  Indeed, Theorem \ref{Main} demonstrates that $\bar{\cM}_{g,n}(X,\beta)$ is Deligne-Mumford whenever $X$ is split projective and smooth, and Proposition \ref{Main2} shows that $\bar{\cM}_{0,n}(X,\beta)$ is smooth under conditions inherited from the bosonic case.  However, the machinery developed in Section \ref{SEGA} illustrates that the inclusion of super-geometric objects in algebraic geometry is by no means trivial.

The main assumption we have made in proving these results is that the target scheme $X$ is split.  While this covers a large number of physically interesting cases (including the maximally supersymmetric twistor space for four dimensional space-time, $\P^{3|4}$), there are some important examples which are left out.  In particular, general complex flag spaces are not generically split, and these are important in defining twistor geometry in higher dimensions (c.f., \cite{Baston:1989, Manin:1997}): recent investigations of six-dimensional gauge theories via twistor methods could lead to twistor-string-like developments in these more general settings \cite{Mason:2011nw, Saemann:2011nb, Saemann:2012rr}.  While we expect that our results should extend to the non-split category, it is clear that our strategy of proof (in particular, the methods used for Lemma \ref{Main4}) will not work.  It may be possible to cover such cases by working with a suitably abstract formalism: a generalization of Lurie's criteria for representability \cite{Lurie} to super-stacks could suffice, but we leave it to future research to investigate this issue in detail.  

In \cite{Movshev:2006py}, the problem of determining the Berezinian on the moduli space of smooth genus $g$ curves in $\P^{p|p+1}$ was studied from a more heuristic point of view whereby it was assumed that the moduli space was an orbifold.  The Berezinian sheaf could then be constructed via a pullback from $\bar{\cM}_{g,n}$ using the map $\kappa$ from \eqref{eqn: curvfunc}. It would be interesting to study to what extent these results generalize to the boundary of the moduli stack.

There are several other interesting questions which remain unanswered in this work: the status of Gromov-Witten theory for our moduli stacks; the existence of coase moduli schemes for these stacks; and the potential for applications in physics.  We will now say a few words about each of these issues, leaving it to future research to investigate them fully.


\subsection{Gromov-Witten Theory}

In the bosonic category, one of the most important applications of Kontsevich's moduli space of stable maps is in the study of Gromov-Witten theory.  Here, Gromov-Witten invariants (rational numbers which can be interpreted as ``counting maps'' from a Riemann surface to the target scheme of a given genus and image class) are computed by integrating cohomology classes over the moduli stack.  In the context of super-geometry, the definition of Gromov-Witten invariants should be the same as in the bosonic category.  

Let $X$ be a smooth split projective scheme; we want Gromov-Witten invariants $\la I_{g,n,\beta}\ra$ to act as
\begin{equation*}
\la I_{g,n,\beta}\ra : H^{*}(X,\Q)^{\otimes n}\rightarrow \Q
\end{equation*}
via an integration over the moduli stack $\bar{\cM}_{g,n}(X,\beta)$.  This requires some homology cycle representing the moduli stack which we can integrate.  In the bosonic case, when $X$ is convex and $g = 0$, then there is a fundamental class $[\bar{\cM}_{0,n}(X,\beta)]$ which corresponds to $1\in H^{*}(\bar{\cM}_{0,n}(X,\beta), \Q)$ in the stack cohomology \cite{Behrend:2004a, Behrend:2004b} by Poincar\'e duality.  However, when $X$ is not convex or $g > 0$, one requires a virtual fundamental class $[\bar{\cM}_{g,n}(X,\beta)]^{\mathrm{virt}}$.  We assume that this object can also be defined using the machinery from the bosonic category: the perfect tangent-obstruction complex \cite{Li:1998}, or perfect obstruction theory \cite{Behrend:1997b,Behrend:1997a}.  Below, we will propose a definition using a formula from \cite{Kontsevich:1995}, which takes the fermionic part into account. 

Gromov-Witten invariants are then defined in the usual fashion (e.g., \cite{Cox:1999}):
\begin{defn}[Gromov-Witten invariants]\label{GWI}
Let $X$ be a smooth globally split projective scheme, $\beta\in H_{2}(X,\Z)$, and $\alpha_{1},\ldots,\alpha_{n}\in H^{*}(X,\Q)$.  For $g,n\geq 0$, the \emph{Gromov-Witten invariant} $\la I_{g,n,\beta}\ra(\alpha_{1}\ldots,\alpha_{n})$ is given by:
\be{eqn: GWId}
\la I_{g,n,\beta}\ra(\alpha_{1}\ldots,\alpha_{n})=\int\limits_{[\bar{\cM}_{g,n}(X,\beta)]^{\mathrm{virt}}} \mathrm{ev}_{1}^{*}(\alpha_{1})\cup \cdots \cup \mathrm{ev}_{n}^{*}(\alpha_{n}),
\ee
where $[\bar{\cM}_{g,n}(X,\beta)]^{\mathrm{virt}}$ is the virtual fundamental class and $\mathrm{ev}_{i}$ are the evaluations maps from \eqref{eqn: eval}.
\end{defn}

Since this definition is identical to the one from the bosonic category, and $\bar{\cM}_{g,n}(X,\beta)$ is just a stack over the category of schemes, it makes sense to assume that Gromov-Witten invariants in our super-geometric setting will have many of the same properties as their bosonic counterparts; in particular, they should still obey some super-geometric analogues of the axioms of Kontsevich and Manin \cite{Kontsevich:1994qz}.  However, one could worry that no new information is gained in the super-geometric setting: that is, that the Gromov-Witten invariants of $X$ are identical to those of $\tau_{b}(X)$.  There are some hints that this may not be true, though.

Consider the simple tree-level example of $\bar{\cM}_{0,n}(\P^{p|q},d)\equiv\bar{\cM}_{d}(\P^{p|q})$ studied at the end of Section \ref{Moduli}.  The space $\P^{p|q}$ is convex, in this simple unobstructed case, Kontsevich's formula for the fundamental class in terms of intersection theory should suffice \cite{Kontsevich:1995}:
\be{eqn: KFC1}
[\bar{\cM}_{d}(\P^{p|q})]=\left(\sum_{k}(-1)^{k}c(\cO^{k}_{\bar{\cM}})\right)\cap \mathrm{Td}(T_{\bar{\cM}})^{-1},
\ee
where $\cO_{\bar{\cM}}$ is the structure sheaf of $\bar{\cM}_{d}(\P^{p|q})$ with its natural $\mathbb{Z}$-grading; $c(\cO^{k}_{\bar{\cM}})$ is the homological Chern class of the appropriate sheaf, and $\mathrm{Td}(T_{\bar{\cM}})$ is the Todd class of the moduli stack.  

Furthermore, since $\P^{p|q}$ is convex, we can assume that there are no contributing factors to $\cO^{k}_{\bar{\cM}}$ coming from an obstruction theory.  By Theorem \ref{Main} and Lemma \ref{Main5}, it follows that $\cO^{0}_{\bar{\cM}}=\wedge^{\bullet}\mathcal{Q}$, where $\mathcal{Q}=(\rho_{*}(\Phi^{*}\cO(1)^{\oplus q}))^{\vee}$ via the universal instanton \eqref{eqn: univinst}.  Hence, Kontsevich's formula \eqref{eqn: KFC1} reads:
\be{eqn: KFC2}
[\bar{\cM}_{d}(\P^{p|q})]=\left(c(\cO_{\tau_{b}(\bar{\cM})})+c\left(\bigoplus_{k=1}^{\infty}\wedge^{k}(\rho_{*}(\Phi^{*}\cO(1)^{\oplus q}))^{\vee}\right)\right)\cap \mathrm{Td}(T_{\bar{\cM}})^{-1}.
\ee
The dependence of this formula on the super-structure of the stack $\bar{\cM}_{d}(\P^{p|q})$ is immediately obvious, and there is additional dependence hidden in $\mathrm{Td}(T_{\bar{\cM}})$.  Since this is the cycle in $H^{*}(\bar{\cM}_{d}(\P^{p|q}), \Q)$ which is to be integrated over to determine any Gromov-Witten invariants of the form $\la I_{0,n,d[\ell]}\ra$, it seems natural to expect these invariants to be different from those of $\P^{p}$.

The ordinary cohomology of a globally split scheme $X$ is known to be equivalent to $\tau_{b}(X)$ \cite{Bartocci:1987, Rabin:1987}; nonetheless it is possible that the Gromov-Witten theory is amenable to the super-structure, as the proposed definition of the virtual fundamental class takes the fermionic part into account.  In other words, the \emph{quantum} cohomology of $X$ is different than the quantum cohomology of $\tau_{b}(X)$.  It is also possible that one should work with a new cohomology theory for supermanifolds based on cyclic cohomology \cite{Connes:1985} or some modification thereof \cite{Varsaie:2002}.

Of course, this line of argument is quite hand-wavy.  To actually prove that Gromov-Witten theory is sensitive to the super-structure of the target manifold requires a calculation of the actual invariants (and a comparison against their counterparts in the bosonic truncation).

\subsection{Existence of Coarse Moduli Spaces}

Where moduli stacks arise in physics, one hopes to work with some bosonic scheme or algebraic space which represents the stack; if the stack is smooth and Deligne-Mumford this is equivalent to working with a space with orbifold singularities instead of the full stack.  Naturally, one can ask when such objects exist in the context of super-geometry.  In moduli problems, it is impossible to find a scheme or algebraic space that fully represents the moduli stack (i.e., a fine moduli space) as soon as there exist objects in the stack with non-trivial automorphisms.  In such cases, one instead looks for the ``coarse moduli space'' of the stack, which is defined as:
\begin{defn}[Coarse moduli space]\label{cms}
Let $X$ be an algebraic space; it is called the \emph{coarse moduli space} for a stack $\mathcal{X}\rightarrow\mathsf{Sch}$ if there is a morphism $\mathcal{X}\xrightarrow{\tau}X$, which is universal.  That is, for all algebraic spaces $Y$, there is a unique completion which makes the following diagram commutative:
\begin{equation*}
\xymatrix{
\mathcal{X} \ar[r]^{\tau} \ar[dr] & X \ar@{-->}[d]^{\exists !} \\
 & Y
 }
\end{equation*}
\end{defn}  

In the theory of bosonic stacks, the essential result for establishing the existence of a coarse moduli space (i.e., a bosonic moduli scheme) is the Keel-Mori theorem \cite{Keel:1997}, which asserts that every Deligne-Mumford bosonic stack has a coarse moduli space under very general conditions. The authors do not know whether the Keel-Mori theorem holds for super-stacks.  Although a super-geometric Keel-Mori theorem may certainly exist, a negative result would show yet another interesting facet of super-geometry whereby one is forced to work directly with the stack in moduli problems.

\subsection{Applications in Physics}

In the Introduction, we emphasized the importance of $\bar{\cM}_{g,n}(X,\beta)$ in recent advances in the understanding of planar $\cN=4$ super-Yang-Mills via twistor-string theory and related constructs.  As one might expect, twistor-string theory also contains gravitational vertex operators; unfortunately, these correspond to $\cN=4$ conformal super-gravity: a theory widely believed to be non-physical \cite{Berkovits:2004jj}.  Conformal super-gravity scattering amplitudes in flat space can be calculated directly in the string theory, and again an integral over the moduli space $\bar{\cM}_{g,n}(\P^{3|4},d)$ is required \cite{Nair:2007md, Dolan:2008gc}.  

However, a recent observation by Maldacena \cite{Maldacena:2011mk} relates tree-level graviton scattering in Einstein and conformal gravity in the presence of a cosmological constant; this means that correct (tree-level) Einstein gravity scattering amplitudes can be obtained for $\cN=0$ and $\cN=4$ from twistor-string theory \cite{Adamo:2012nn}.  Extending these ideas to maximally supersymmetric (i.e., $\cN=8$) Einstein super-gravity will certainly continue to require an integral over the moduli space, but perhaps with a non-Calabi-Yau target.  Furthermore,  deducing simplifying structures about the gravitational theory (such as a MHV-like formalism) from any twistor-string theory could follow from the properties of the moduli space, in analogy with gauge theory (c.f., \cite{Gukov:2004ei}).  In any case, we expect $\bar{\cM}_{g,n}(X,\beta)$ to play an important role in any twistorial developments in gravity as well as gauge theory.

On a more general level, the bosonic version of the moduli stack $\bar{\cM}_{g,n}(X,\beta)$ has been of much interest in the study of mirror symmetry (c.f., \cite{Cox:1999, Vafa:2003}).  The analogue of the Calabi-Yau condition for a projective scheme or variety $X$ in super-geometry is that its Berezinian sheaf has a canonical global section: $\Ber_{X}\cong\cO_{X}$ \cite{Manin:1997}.  When $X=\P^{p|q}$ this occurs when $p=q-1$ \cite{Sethi:1994ch, Schwarz:1995ak, Witten:2003nn}, and for a smooth hypersurface $V^{[s]}\subset\P^{p|q}$ of degree $s$ the condition is $p+1-q=s$ (this condition can also be generalized to hypersurfaces in weighted projective varieties) \cite{Garavuso:2011nz}.  Although there are counter-examples to Yau's theorem in super-geometry \cite{Rocek:2004bi}, they appear to be confined to fermionic dimension one \cite{Zhou:2004su, Rocek:2004ha}, so we can usually apply our intuition from ordinary differential geometry to these objects.

The notion of mirror symmetry for Calabi-Yau supermanifolds has generated interest as a candidate for incorporating rigid Calabi-Yau manifolds into the mirror symmetry landscape \cite{Sethi:1994ch, Schwarz:1995ak}.  However, most studies of mirror symmetry in super-geometry have utilized Landau-Ginzburg models to construct the mirror manifold \cite{Aganagic:2004yh, Belhaj:2004ts, Garavuso:2011nz}.  It would be very interesting to know if mirror symmetry for supermanifolds could be studied formally using $\bar{\cM}_{g,n}(X,\beta)$ or its Gromov-Witten theory, as it has for bosonic manifolds.


\subsubsection*{\textit{Acknowledgments}}

It is a pleasure to thank Jeff Rabin and Martin Wolf for many interesting comments and suggestions.  We also thank Lionel Mason, Martin Rocek, and Dave Skinner for useful discussions and for looking at preliminary versions of this work.  TA is supported by a National Science Foundation (USA) Graduate Research Fellowship and by Balliol College; MG is supported by an EPSRC grant received under the contract EP/G027110/1.

\bibliography{sm}
\bibliographystyle{JHEP}
\end{document}